\documentclass[11pt]{amsart}
\usepackage{fullpage,verbatim,amssymb}
\usepackage[hidelinks]{hyperref}
\usepackage[dvipsnames]{xcolor}
\usepackage[active]{srcltx}
\usepackage{mathtools}
\mathtoolsset{showonlyrefs}
\usepackage{bbm}
\usepackage{xurl}

\definecolor{blue}{rgb}{0,0,1}
\definecolor{red}{rgb}{1,0,0}
\definecolor{green}{rgb}{0,.6,.2}
\definecolor{purple}{rgb}{1,0,1}

\long\def\red#1\endred{\textcolor{red}{#1}}
\long\def\blue#1\endblue{\textcolor{blue}{#1}}
\long\def\purple#1\endpurple{\textcolor{purple}{ #1}}
\long\def\green#1\endgreen{\textcolor{green}{#1}}
\long\def\teal#1\endteal{\textcolor{teal}{#1}}

\newcommand{\C}{\mathbb{C}}
\newcommand{\R}{\mathbb{R}}

\newcommand{\Z}{\mathbb{Z}}

\newcommand{\legendre}[2]{\ensuremath{\left( \frac{#1}{#2} \right) }}
\newcommand{\pdiv}{\mid\!\mid}

\newcommand{\sm}{\left(\begin{smallmatrix}}
\newcommand{\esm}{\end{smallmatrix}\right)}
\newcommand{\bpm}{\begin{pmatrix}}
\newcommand{\ebpm}{\end{pmatrix}}

\DeclareMathOperator{\new}{new}
\DeclareMathOperator{\ord}{ord}
\DeclareMathOperator{\re}{Re}
\DeclareMathOperator{\tr}{tr}

\DeclarePairedDelimiter\abs{\lvert}{\rvert}%

\newtheorem{theorem}{Theorem}
\newtheorem{lemma}[theorem]{Lemma}
\newtheorem{proposition}[theorem]{Proposition}
\newtheorem{corollary}[theorem]{Corollary}

\theoremstyle{remark}

\newtheorem{remark}[theorem]{Remark}

\numberwithin{theorem}{section}
\numberwithin{equation}{section}

\title{Murmurations in the depth aspect}
\author{Claire Burrin} 
\email{claire.burrin@math.uzh.ch}
\author{Vivian Kuperberg}
\email{vivian.kuperberg@math.ethz.ch}
\author{Min Lee}
\email{min.lee@bristol.ac.uk}
\author{Catinca Mujdei}
\email{catinca.mujdei.23@ucl.ac.uk}
\author{Hsin-Yi Yang}
\email{h.y.yang@uva.nl}

\begin{document}
\maketitle

\begin{abstract}
We compute the murmuration density function for the family of Hecke forms of weight $k$ and prime power level $N=\ell^a$, with $\ell$ a fixed odd prime and  $a\to\infty$ through odd values.
\end{abstract}
\section{Introduction}

In recent years, a new phenomenon known as \emph{murmurations} has emerged in the study of families of $L$-functions. It refers to the appearance of remarkable oscillatory patterns when averaging suitably normalized arithmetic data, such as the correlation of Frobenius traces $a_E(p)$ of elliptic curves or Hecke eigenvalues at primes, with root numbers, over large families. This phenomenon was first uncovered experimentally with the application of machine learning algorithms to datasets of the $L$-Functions and Modular Forms Database (LMFDB) \cite{hlop}; the authors introduced the term \emph{murmurations} by analogy to the murmurations of large flocks of birds, large-scale structured patterns emerging from the many small interactions between individual birds in flight. 


A general conceptual framework for murmurations was proposed by Sarnak \cite{sarnak}, relating the existence and form of murmuration densities to the conductor growth of the family. 
The case of modular forms plays a central role in this theory. For Hecke forms, Zubrilina laid down the main template for the computation of the murmuration density using the Eichler--Selberg trace formula in the (square-free) level aspect \cite{z23}. 
The appearance of oscillatory polynomials in the limiting murmuration densities is then natural from the perspective of these trace computations. This approach has been successfully extended in the weight aspect \cite{bbld} and for Maass forms \cite{BLLDDSHZ}. 
In this paper, we will be concerned with the way in which murmurations interact with arithmetic depth, by considering families of Hecke forms restricted to prime-power levels $N=\ell^a$, with $\ell$ a fixed odd prime and $a\to\infty$. 
After the first version of this paper appeared on the arXiv, Tomczak~\cite{Tom26} extended the depth-aspect picture in several directions, building on the present work: he treated the even-exponent holomorphic case, as well as related Maass and quaternionic families.

\subsection{Determination of the murmuration density}
 Let $H^{\rm new}_k(N)$ be a Hecke basis for trivial character weight $k$ cusp newforms for $\Gamma_0(N)$. For each $f\in H^{\rm new}_k(N)$, $\epsilon_f$ denotes the root number of $f$ and $\lambda _f(n)=a_f(n)n^{(1-k)/2}$ is the $n$th normalized Hecke eigenvalue, having set $a_f(1)=1$. Fix a compact window $E\subset\R_{>0}$ of Lebesgue measure $|E|>0$, and consider the finite density
\begin{align}\label{density}
    \mathcal{M}_{k,\ell,E}(a) := \frac{1}{\#}\sum_{\substack{n/\ell^{a}\in E\\ n \text{ prime}}} \log n \sum_{f\in H^{\rm new}_k(\ell^a)} \epsilon_f \lambda_f(n) \sqrt{n},
\end{align}
where the normalization factor $\#$ stands for the count 
$$
\# := \sum_{\substack{n/\ell^{a}\in E\\ n \text{ prime}}}\log n \sum_{f\in H^{\rm new}_k(\ell^a)}1 \sim
\frac{k-1}{12} \ell^{2a}(1-\ell^{-1})^2 (1+\ell^{-1})|E|;
$$
see \cite[Theorem 1]{martin}. 
Our main theorem (Theorem \ref{thm:main} below) is that the resulting limiting density, as $a\to\infty$, is an explicit integral whose integrand involves a sum over integers arising from the Eichler--Selberg trace formula, with arithmetic weights and a Chebyshev polynomial $U_{k-2}$. Before presenting this result, we need to motivate our choice of definition for  $\mathcal{M}_{k,\ell,E}(a)$ in comparison to the densities studied previously in the level and weight aspects.

The logarithmic weight in \eqref{density} makes for a more convenient handling of the outer sum running over primes $n$, but can be done without. 
The inner weight of $\sqrt{n}$ serves to amplify the correlation between the Hecke eigenvalue $\lambda_f(n)$ and the root number $\epsilon_f$, and is also present in the works discussed above. 
However the finite density considered here differs from that considered earlier in the following aspect. Zubrilina's work \cite{z23} studies murmurations for a single prime $p$, with an average over $N\in [p/y, p/y+p^\delta]$ and $f\in H_k^{\new}(N)$ (and $N$ square-free). 
In this setting, the number of $f$ with conductor $N\leq X$ is $O(X^2)$. 
In the weight aspect \cite{bbld}, the number $f\in H_k(1)$ with conductor $k^2 \leq X$ is $O(X)$, which is of the same order as the conductor. 
Therefore, in line with Sarnak's philosophy \cite{sarnak}, a local averaging (over $p$, with $p/N\in E$) is needed. 
Finally, in our case, if we consider averaging $N=\ell^a\in [p/y, p/y+p^{\delta}]$, at most a few $N$'s are in the given interval; the average has no effect. 
Here too the number of $f$ with the conductor $\ell^a \sim X$ 
is just $O(X)$, and again a local averaging is needed.

Our main result is the following:
\begin{theorem}\label{thm:main}
As $a\to\infty$, we have
\begin{multline}
\frac{\sum_{n/\ell^{2a+1}\in E,\, n\text{ prime }} \log n \sum_{f\in H_k^{\new}(\ell^{2a+1})} \epsilon_f\lambda_f(n)\sqrt{n}}
{\sum_{n/\ell^{2a+1}\in E,\, n\text{ prime }}\log n \sum_{f\in H_k^{\new}(\ell^{2a+1})}1}
\\ = \frac{(-1)^{\frac{k}{2}+1}}{k-1}\frac{2\pi}{(1-\ell^{-1})} 
\frac{1}{|E|}\int_E\sum_{\substack{t\in\mathbb{Z} \\ |t|<2\ell\sqrt{v}}}(\mathbbm{1}_{\{\ell\vert t\}}-\ell^{-1})\bigg(\prod_{\substack{p\nmid \ell t}}\frac{p^2-p-1}{p(p-1)}\bigg)\sqrt{v-\frac{t^2}{4\ell^2}}U_{k-2}\bigg(\frac{t}{2\ell\sqrt{v}}\bigg)\,dv  
\\ + O_{E,k,\ell,C}(a^{-C}),
\end{multline}
where $C>0$ is any positive constant and $U_{k-2}$ is the Chebyshev polynomial given by
\begin{align*}
    U_n(\cos\theta) \coloneqq \frac{\sin((n+1)\theta)}{\sin \theta}.
\end{align*}
\end{theorem}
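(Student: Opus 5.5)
Writing $N=\ell^{2a+1}$ and $p$ for a prime with $p/N\in E$, the plan is to convert the root-number-twisted sum into a trace of Hecke operators composed with an Atkin--Lehner involution, and then evaluate that trace with the Eichler--Selberg trace formula.

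\textbf{Step 1 (root numbers as Atkin--Lehner eigenvalues).} For $f\in H_k^{\new}(N)$ we have $\epsilon_f=i^k\eta_f$, where $\eta_f$ is the eigenvalue of $W_N$. Hence
\[
\sum_f\epsilon_f\lambda_f(p)\sqrt{p}=i^k p^{(2-k)/2}\,\tr\bigl(T_pW_N\mid S_k^{\new}(N)\bigr).
\]

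\textbf{Step 2 (removing oldforms).} The trace on the full space $S_k(\ell^{b})$ decomposes over newspaces $\ell^{c}$, $c\le b$. The old contribution from level $\ell^c$ carries the trace of $W_{\ell^{b-c}}$ on the degeneracy copies. This trace vanishes when $b-c$ is odd and is $\pm$ one copy otherwise. Inverting this relation, $\tr(T_pW_N\mid S^{\new}_k(N))$ is a short alternating combination of $\tr(T_pW_{\ell^{b}}\mid S_k(\ell^{b}))$ with $b=2a+1$ and $b=2a-1$. The lower-level terms will turn out to be smaller by a power of $\ell$, or they combine to produce the factor $(\mathbbm 1_{\ell\mid t}-\ell^{-1})$.

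\textbf{Step 3 (trace formula).} I would apply the Eichler--Selberg trace formula for $T_pW_N$ (Skoruppa--Zagier, or Yamauchi's version). Its main term is a sum over $t$ with $t^2<4p\ell^{2a+1}$, weighted by $U_{k-2}\bigl(t/(2\sqrt{pN})\bigr)$ times a class-number sum $\sum H\bigl((t^2-4pN)/N\bigr)$ restricted to $N\mid t$ (up to local corrections at $\ell$). Writing $t=\ell^{a}t'$ collapses the discriminant to $\ell^{2a}\bigl(t'^2-4p\ell\bigr)$, and then $t'/(2\ell\sqrt v)$ with $v=p/\ell^{2a}\cdot\ell^{-1}$ gives the Chebyshev argument in the statement after renormalizing. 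The identity, hyperbolic and Eisenstein terms are $O(p^{1/2+\epsilon})$ or smaller, which is negligible against $\#\asymp \ell^{4a}$.

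\textbf{Step 4 (averaging the class numbers over $p$).} For each fixed $t'$ I would need the average over primes $p$ with $p/\ell^{2a+1}\in E$ of $\log p\cdot H(\ell^{2a}(t'^2-4p\ell))$. Writing $H$ through $L(1,\chi_D)$ times a conductor factor and then using Dirichlet's class number formula, the average becomes $\sqrt{4p\ell-t'^2}$ times the local density $\prod_{q\nmid \ell t}\frac{q^2-q-1}{q(q-1)}$. This density comes from the distribution of $t'^2-4p\ell$ modulo $q^j$ as $p$ ranges over primes, and is to be handled by Siegel--Walfisz/Bombieri--Vinogradov together with a truncated Euler product. The local factor at $\ell$ should supply the $(1-\ell^{-1})^{-1}$ and, combined with Step 2, the weight $\mathbbm 1_{\ell\mid t}-\ell^{-1}$. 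Converting the sum over $p$ into $\int_E dv$ via the prime number theorem, and dividing by Martin's asymptotic for $\#$, should give the stated constant $(-1)^{k/2+1}2\pi/((k-1)(1-\ell^{-1}))$.

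\textbf{Main obstacle.} I expect the hardest part to be Step 4 with a uniform error of $O(a^{-C})$. The modulus $\ell^{2a}$ grows, so the sum over $p$ is taken over roughly $x=\ell^{2a}$ sized primes. The class numbers must be truncated, $L(1,\chi)=\sum_{n\le X}\chi(n)/n+\text{error}$, with the tail controlled on average over $p$ by a large-sieve bound. The primes in progressions modulo $n\le X$ are then handled by Siegel--Walfisz, whose $(\log x)^{-C}=a^{-C}$ savings is exactly what produces the stated error term. I would first try $X=(\log x)^{B}$ with a Pólya--Vinogradov bound on the tail, averaged over discriminants.
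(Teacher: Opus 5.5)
Your outline has the same skeleton as the paper's proof: root numbers as Atkin--Lehner eigenvalues, a trace formula for $T_pW_N$, interchange of the $t$- and $p$-sums, and an average of $L(1,\psi_{T^2-4\ell p})$ over primes in progressions. It departs from the paper at two points, and both departures are sound.

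\emph{The algebraic reduction.} The paper never passes through the full space. It applies Skoruppa--Zagier's newform formula $\sum_{M\mid N}\alpha(N/M)s_{k,M}(n,M)$, in which the hyperbolic and $k=2$ terms cancel identically for odd exponent. It then uses the identity \eqref{e:A_jodd} to collapse everything into the Ramanujan-sum formula of Proposition~\ref{prop:explicit trace formula}. Your inversion $\tr(T_pW_N,S_k^{\new}(\ell^{2a+1}))=\tr(T_pW_{\ell^{2a+1}},S_k(\ell^{2a+1}))-\tr(T_pW_{\ell^{2a-1}},S_k(\ell^{2a-1}))$ is correct, since on each old block $W_N$ acts by $\eta_f$ times the reversal $\ell^j\mapsto\ell^{b-c-j}$. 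It leads to the same formula. By the same identity, the elliptic part of the full-space trace at odd level $\ell^b$ is, up to the factor $(-1)^{k/2}p^{(k-1)/2}/\pi$, equal to $\ell^{b/2}\sum_{\ell^{(b+1)/2}\mid T}\cos((k-1)\phi_{T,\ell p})L(1,\psi_{T^2-4\ell p})$. The difference of the two levels then produces exactly the weight $c_{\ell^{a+1}}(T)/\sqrt{\ell}$.

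\emph{The analytic lemma} (the analogue of Lemma~\ref{lem:lemma4.5-equivalent-using-bombieri-vinogradov}). The paper truncates $L(1,\psi_D)$ smoothly at a power of $B$, because its tail bound, Heath-Brown's fourth moment, loses $B^{\varepsilon}$. This forces moduli $m^2$ up to a power of $B$, and hence Baker's Bombieri--Vinogradov theorem for square moduli. Your truncation at $X=(\log B)^{C'}$ keeps the moduli $m^2\le X^2$ inside the Siegel--Walfisz range, so no Bombieri--Vinogradov input is needed. The price is a tail bound with only logarithmic losses. Heath-Brown's large sieve would not give that, but your averaged P\'olya--Vinogradov idea does:
\begin{itemize}
\item After Cauchy--Schwarz and extending $p$ to all integers $n$ by positivity, consider the mean square over $n$ of $\sum_{X<m\le Y}(D_n/m)/m$.
\item The diagonal ($m_1m_2=\square$) contributes $\ll B(\log X)/X$.
\item The off-diagonal contributes $\ll Y(\log Y)^{O(1)}$, by P\'olya--Vinogradov in $n$ for the character $n\mapsto((t^2-4\ell n)/m_1m_2)$.
\item The range $m>Y=B^{2/3}$ is handled pointwise.
\item Non-fundamental $D_n$ are handled by writing $m=gm'$ with $g$ supported on the primes dividing the square part of $D_n$.
\end{itemize}
Both routes give $O(a^{-C})$. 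Yours is more elementary; the paper's power-size truncation is what upgrades to a power saving under GRH.

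Three statements in your Steps 2--3 need correcting, though none breaks the argument.
\begin{itemize}
\item The level-$\ell^{2a-1}$ term is not smaller. Since $p/\ell^{2a-1}\approx\ell^2v$, its range of traces is $\ell$ times longer, and it supplies exactly the $-\ell^{-1}\sum_{|t|<2\ell\sqrt v}$ part of the weight $\mathbbm{1}_{\{\ell\mid t\}}-\ell^{-1}$. Level $\ell^{2a+1}$ supplies the $\ell\mid t$ part.
\item For $k=2$, each full-space trace contains a correction term proportional to $\sigma_1(p)\asymp p$, not $O(p^{1/2+\varepsilon})$. Summed over $p$ it is of the same order as $\#$. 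It is harmless only because it is identical at both levels and cancels in the difference; in the paper it is the term $\sigma_0(M)\sigma_1(n)$, annihilated by $\alpha$.
\item The class numbers are those of $t^2-4pN=\ell^{2a}(t'^2-4\ell p)$, with $t=\ell^at'$ and $N\mid t$. So the relevant discriminants $t'^2-4\ell p$ are exactly divisible by $\ell$, and their local factor at $\ell$ is $1$. They are not $(t^2-4pN)/N$, which is prime to $\ell$ and would give the wrong constant. Also, the Chebyshev argument is $t/(2\sqrt{pN})=(t'/\ell^{a})/(2\ell\sqrt v)$, so the theorem's summation variable is $t'/\ell^{a}$, not $t'$.
\end{itemize}
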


\begin{remark}
\begin{enumerate} 
\item Note that we consider only odd powers, i.e., $H_k^{\new}(\ell^{2a+1})$. 
In this case, all newforms are twist-minimal forms, meaning that no newforms of level $\ell^{2a+1}$ arise from lower-level newforms twisted by characters. 
Since $H_k^{\new}(\ell^{2a})$ contains non-twist-minimal forms, the corresponding Eichler--Selberg trace formula is slightly messier. 
The even-exponent case is treated in the subsequent work of Tomczak~\cite{Tom26}, where the same limiting density is obtained.

\item We get a stronger error term if we assume the Generalized Riemann Hypothesis (GRH) for Dirichlet $L$-functions, namely 
\[O_{E,k,\ell,\varepsilon}((\ell^{2a+1})^{-\frac{1}{2}+\varepsilon}).\]
This error term is of the same strength as the analogous expression in \cite{bbld}. The key difference between our unconditional and conditional results is an application of the Bombieri--Vinogradov theorem and a Lindel\"of-on-average estimate in place of GRH. 
However, the error term resulting from the Bombieri--Vinogradov application is weaker than the error term coming from GRH, which leads to the different results. We expect that an unconditional analog of \cite[Theorem 1.1]{bbld} should hold with an error term in line with Theorem \ref{thm:main}.
\end{enumerate} 
\end{remark}

\begin{figure}[h]\label{fig:murmurations_depth}
\centering
\includegraphics[scale=.46]{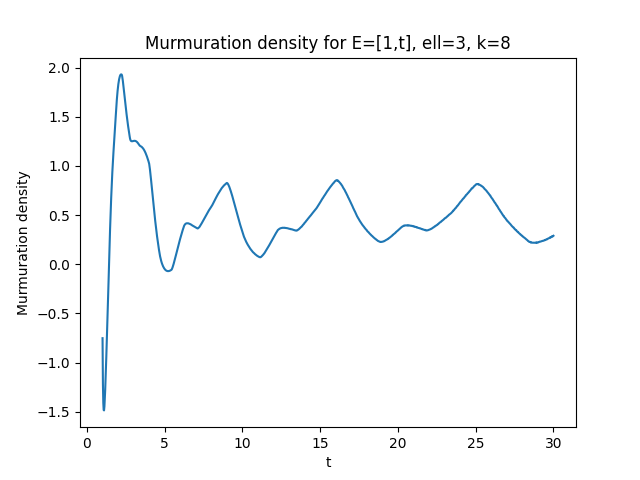}
\includegraphics[scale=.46]{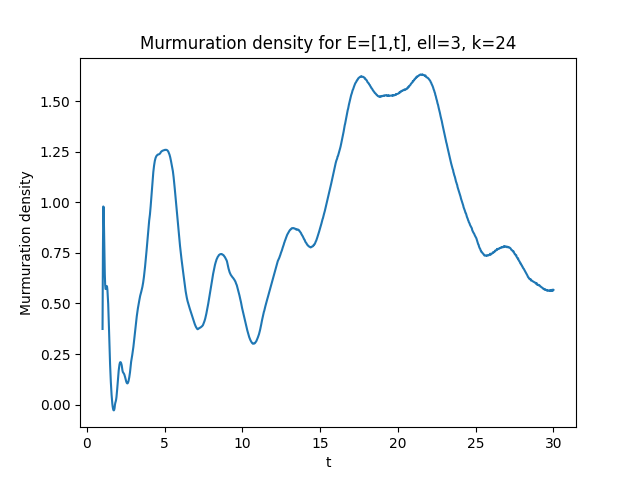}   
\includegraphics[scale=.3]{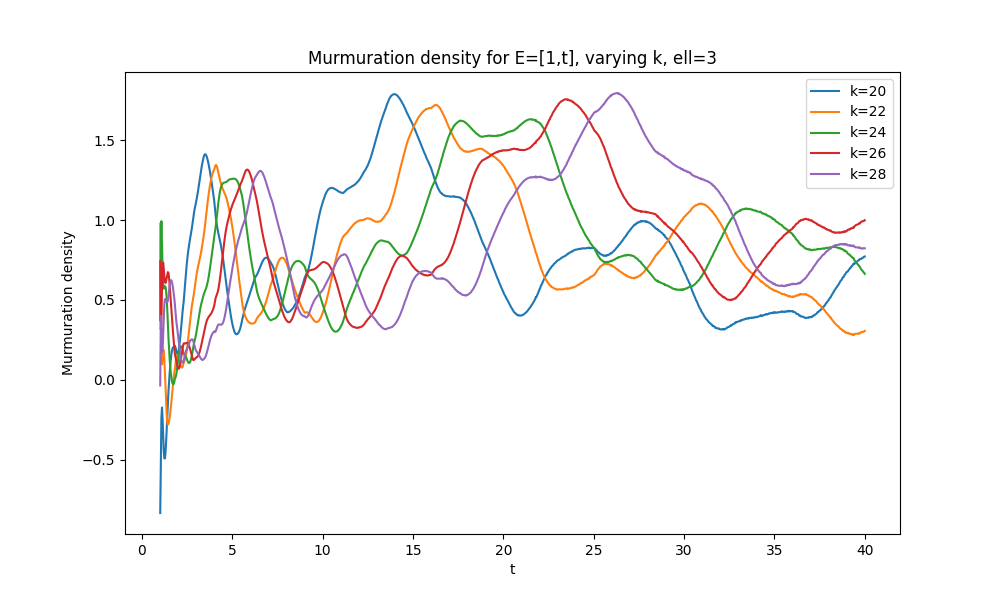}
\includegraphics[scale=.3]{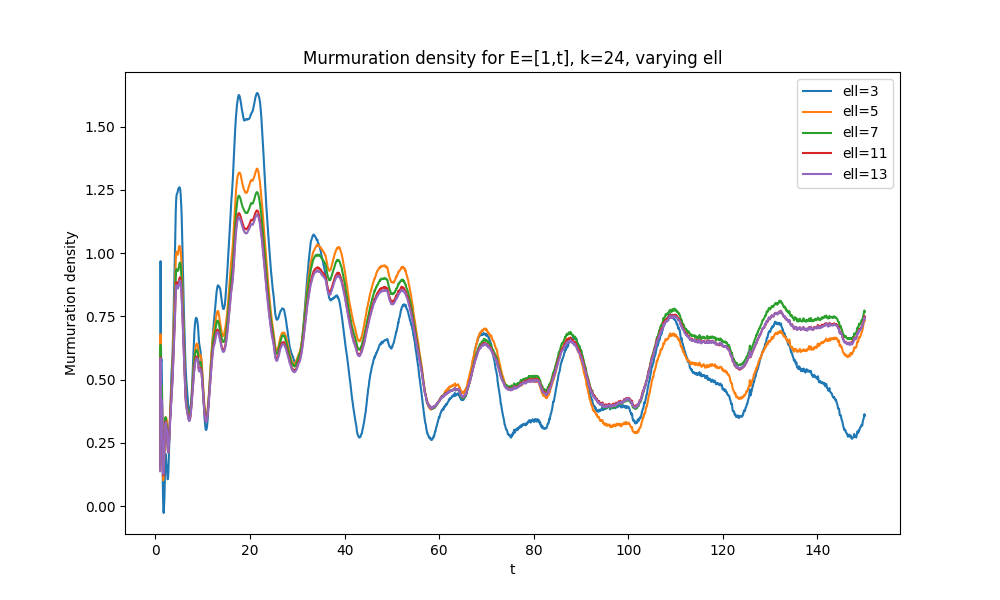}   
\caption{Plots of the asymptotic murmuration density function in Theorem \ref{thm:main} for various values of $\ell,k$ and window $E=[1,t]$}
\end{figure}

The interpretation of the plots in Figure \ref{fig:murmurations_depth} remains elusive. 
The plots suggest a robust oscillatory shape for the windows $E=[1,t]$, but they do not point to an obvious simple limiting behavior as either $k$ or $\ell$ varies. A better understanding of these observed behaviors would be highly desirable.

Naturally we would also like to compare this data to empirical data. A natural strategy would be to compute $\mathcal{M}_{k,\ell,E}(a)$ using newform data from the LMFDB, which can be accessed through Sage. This requires retrieving the Hecke eigenvalues of all elements in a basis $H^{\new}_k(\ell^{a})$. However, for large $a$ these data are not precomputed in the LMFDB, so Sage must instead construct the relevant newform spaces during the computation. Since this involves explicit modular-forms calculations at large level $\ell^a$, the resulting computations become very expensive and introduce substantial computational overhead. As a result, numerical experiments are only feasible for small values of $a$ and do not effectively probe the asymptotic regime we are interested in.

\subsection{Sketch of proof of Theorem \ref{thm:main}}

As in previous works, we start from
\begin{align*}
    \sum_{f\in H_k^{\rm new}(N)} \epsilon_f \lambda_f(n) = (-1)^{k/2}n^{(1-k)/2} \tr(T_n\circ W_N, S_k^{\rm new}(N))
\end{align*}
and apply a variant of the Eichler--Selberg trace formula due to Skoruppa and Zagier \cite{sz}. Specializing to $N=\ell^a$, $\ell\geq3,a\geq 5$ odd, and $n\neq \ell$ prime, we obtain in Section \ref{sec:ESTF} the explicit expression 
\begin{align}
\sum_{f\in H_k^{\rm new}(N)} \epsilon_f \lambda_f(n) = \frac{1}{\pi} \sum_{\substack{t\in \Z\\ t^2-4\ell n<0}} \frac{c_{\ell^{(a+1)/2}}(t)}{\sqrt{\ell}} \cos((k-1) \phi_{t, \ell n}) L(1, \psi_{t^2-4\ell n}),
\end{align}
where $c_q(m)$ is a Ramanujan sum,  $\phi_{t, m} := \arcsin\big(\frac{t}{2\sqrt{m}}\big)\in (-\frac{\pi}{2}, \frac{\pi}{2})$, and $\psi_D(m)$ is a Kronecker symbol.

Exchanging order of summation, we are left with a sum over primes in arithmetic progressions, for which we adapt the approach of \cite{bbld} in Section \ref{sec:proof}. Our main technical result is Lemma \ref{lem:lemma4.5-equivalent-using-bombieri-vinogradov}, which is the analog to \cite[Lemma 4.5]{bbld}. 
Unlike in the latter work, we do not assume the Generalized Riemann Hypothesis. Our unconditional argument relies on an application of the Bombieri--Vinogradov theorem, as well as a Lindel\"of-on-average bound due to \cite{HB}. 
We note that the application of Bombieri--Vinogradov is slightly unusual in that the moduli that appear in our average are all squares, so that the average is taken over a thin set of moduli. Happily, an appropriate variant of the Bombieri--Vinogradov theorem for our purposes was proven in \cite{MR3670199-baker}; we discuss this further in Section \ref{sec:bombieri_vin}.  

\subsection{Acknowledgments} Thanks are due to the organizers of WINE 5 (Women in Numbers Europe) and its sponsors (the Clay Foundation) for putting together such a nice event and for the precious opportunity to connect women in numbers. 
We thank Andrew R. Booker for suggesting the problem of murmurations in the depth aspect during his talk at the conference on Analytic Number Theory and Related Topics at RIMS.
V.K. thanks Yuval Wigderson for helpful conversations concerning Bombieri--Vinogradov. C.M. thanks Alberto Acosta Reche for his suggestion to use a moment estimate in the proof of Lemma \ref{lem:lemma4.5-equivalent-using-bombieri-vinogradov}.
C.B. acknowledges the partial support of SNSF grant 201557 and SNSF-ANR grant 10003145.
M.L. was supported by a Royal Society University Research Fellowship. C.M. was supported by the Engineering and Physical Sciences Research Council [EP/S021590/1], through the EPSRC Centre for Doctoral Training in Geometry and Number Theory (The London School of Geometry and Number Theory) at University College London. H.Y.Y. thanks the Dutch Research Council (NWO) for the support through the grant OCENW.XL21.011.

\section{Eichler--Selberg trace formula}\label{sec:ESTF}

In this section, we fix an odd prime $\ell$, $a\in\Z_{\geq 5}, k\in 2\Z_{\geq 1}$, and a compact window~$E\subset\R_{>0}$ of Lebesgue measure~$|E|>0$.
Let~$S_{k}(\ell^a)$ be the space of holomorphic cusp forms of weight~$k$ and level $\ell^a$, and let~$S_{k}^{\textrm{new}}(\ell^a)$ be the subspace of Hecke newforms in the sense of Atkin--Lehner.
Write~$H_{k}^{\textrm{new}}(\ell^a)$ for a basis of $S_{k}^{\new}(\ell^a)$, with each element normalized to have leading coefficient~$1$.
Set\begin{equation}
    \Sigma \coloneqq \Sigma(\ell, a, k,E) \coloneqq \sum_{\substack{n/\ell^a\in E \\n\text{ prime }}}  \sqrt n\log n 
\sum_{f\in H_k^{\new}(\ell^a)} \epsilon_f \lambda_f(n),
\end{equation} where~$\epsilon_{f}$ denotes the root number of~$f$, and~$\lambda_{f}(n)$ its $n$th Hecke eigenvalue.
In this section, we evaluate the inner sum in the definition of $\Sigma$. First note that 
\begin{equation}\label{eq:trace formula and coeff}
    \sum_{f\in H_k^{\new}(\ell^a)}\epsilon_f\lambda_f(n) = (-1)^{k/2}n^{(1-k)/2}\tr(T_n\circ W_{\ell^a},S_k^{\new}(\ell^a)),
\end{equation}
where $T_n$ and $W_{\ell^a}$ denote the $n$th Hecke operator and $\ell^a$th Atkin--Lehner involution, respectively.
Hence we proceed by applying a relevant trace formula to $\tr(T_n\circ W_{\ell^a},S_k^{\new}(\ell^a))$.

For $D=dL^2$ with $d$ a fundamental discriminant and $L\in \Z_{\geq0}$, we define the Kronecker symbol
\begin{equation}
\psi_D(m) := \legendre{d}{m/(m,L)},
\end{equation}
with the convention~$\psi_0(m)=1$. 
Note that~$\psi_D(\cdot)$ is periodic modulo~$D$, and it is the quadratic character of conductor~$D$ if $D$ is fundamental. We write
\begin{equation}
L(1, \psi_D) := \sum_{m=1}^\infty \frac{\psi_D(m)}{m}.
\end{equation}

\begin{proposition} \label{prop:explicit trace formula}
Let~$a\in\Z_{\geq5}$ be odd. 
For a prime~$n\neq \ell$, we have 
\begin{align}
(-1)^{k/2} n^{(1-k)/2} \tr(T_n\circ W_{\ell^a}, S_k^{\new}(\ell^a)) = \frac{1}{\pi} \sum_{\substack{t\in \Z\\ t^2-4\ell n<0}} \frac{c_{\ell^{(a+1)/2}}(t)}{\sqrt{\ell}} \cos((k-1) \phi_{t, \ell n}) L(1, \psi_{t^2-4\ell n}),
\end{align}
where $c_q(m) := \,\sideset{}{^*}\sum\limits_{x\bmod{q}} e^{2\pi ixm/q}$ is the usual Ramanujan sum, and $\phi_{t, m} := \arcsin\big(\frac{t}{2\sqrt{m}}\big)\in (-\frac{\pi}{2}, \frac{\pi}{2})$.
\end{proposition}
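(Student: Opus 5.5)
We will start from the Skoruppa--Zagier form of the Eichler--Selberg trace formula for $T_n\circ W_{N}$ on the full space $S_k(N)$ \cite{sz}, specialized to $N=\ell^a$ and $n$ prime with $n\neq\ell$. The first step is to isolate the new part via the Möbius-type inversion in the Atkin--Lehner decomposition. For $W_{\ell^a}$, the old contributions from level $\ell^b$, $b<a$, enter through the operators $W$ acting on old spaces. Skoruppa--Zagier package this as a trace on $S_k^{\new}$ expressed with a multiplicative arithmetic function of the level. We will use their formula in the shape
\[
\tr(T_n\circ W_{N},S_k^{\new}(N)) = \text{(hyperbolic/identity terms)} -\tfrac12\sum_{t^2<4 n N'} p_k(t, nN')\, H^{\new}_{N}(t^2-4nN') ,
\]
where the relevant ``norm'' is $n\ell$ rather than $n\ell^a$. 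The reason is that for odd $a$ the Atkin--Lehner twisting effectively reduces the level to $\ell$ times a square, and only discriminants $t^2-4\ell n$ survive.

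The main terms to kill are the following.
\begin{itemize}
\item The identity term is present only if $n\ell^a$ is a square, which is impossible since $a$ is odd and $n\ne\ell$ is prime.
\item The hyperbolic (split) terms, with $t^2-4\ell n$ a positive square, would need $\ell n=uv$ with $u+v=t$; these contribute only at the $\ell$-adic level in a way that cancels in the new part for $a\ge5$.
\end{itemize}
We will check both by computing the local factor at $\ell$ explicitly.

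The key step is to evaluate the elliptic terms. We write $p_k(t,m)=\frac{\rho^{k-1}-\bar\rho^{k-1}}{\rho-\bar\rho}$ with $\rho=\frac{t+i\sqrt{4m-t^2}}{2}$. With $\rho=\sqrt m\, e^{i(\pi/2-\phi_{t,m})}$ and $k$ even, this gives $m^{(k-1)/2}\cdot(-1)^{k/2+1}\cos((k-1)\phi_{t,m})/\sqrt{m-t^2/4}$ up to normalization. Combining with $(-1)^{k/2}n^{(1-k)/2}$, one gets the factor $\sqrt{\ell}^{\,k-1}$ and hence the stated $\ell^{-1/2}$ after normalization.

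Next, Hurwitz class numbers $H(D)$ for $D=t^2-4\ell n<0$ are converted via the class number formula,
\[
H(|D|)\approx \frac{\sqrt{|D|}}{\pi}\sum_{f}\ldots = \frac{\sqrt{|D|}}{\pi}L(1,\psi_D),
\]
using the definition of $\psi_D$ for non-fundamental $D$. This is exactly why $\psi_D(m)=\legendre{d}{m/(m,L)}$ was introduced: it absorbs the sum over conductors. We expect the $\sqrt{|D|}$ to cancel the denominator $\sqrt{4\ell n-t^2}$ from $p_k$.

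The main obstacle is the local computation at $\ell$. We must show that the $\ell$-part of the Skoruppa--Zagier weight, after new-part inversion over $\ell^b$ with $b\le a$, equals the Ramanujan sum $c_{\ell^{(a+1)/2}}(t)$. That sum is $\ell^{(a+1)/2}-\ell^{(a-1)/2}$, $-\ell^{(a-1)/2}$, or $0$ according to whether $\ell^{(a+1)/2}\mid t$, $\ell^{(a-1)/2}\,\|\, t$, or neither. The plan is as follows:
\begin{enumerate}
\item Write the $\ell$-adic factor as a count of solutions $x \bmod \ell^{a}$ to the relevant quadratic congruence $x^2-tx+\ell n\equiv0$ in the form used by Skoruppa--Zagier, with the twist by $W$ weighting $x$ by $\ell^{a}$-divisibility conditions.
\item Apply the inversion over levels.
\item Use the explicit evaluation of $c_{\ell^j}$ to match it.
\end{enumerate}
Oddness of $a$ and $a\ge5$ guarantee that all newforms are twist-minimal and that the local densities stabilize, so the boundary cases $b$ small contribute nothing extra. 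Having this, we will assemble the constants and compare normalizations, including the $\frac12$ and the $\frac{1}{\pi}$.
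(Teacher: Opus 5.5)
Your outline has the right architecture: start from Skoruppa--Zagier, remove the non-elliptic terms, turn $p\cdot H$ into $\cos((k-1)\phi)\,L(1,\psi)$ via the class number formula, and match the $\ell$-adic weights with $c_{\ell^{(a+1)/2}}(t)$. There is a genuine gap, though: the step that actually produces the Ramanujan sum is never carried out, and the mechanism you propose for it does not fit the formula you start from.

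The new-part formula is $\sum_{M\mid N}\alpha(N/M)s_{k,M}(n,M)$. For $N=\ell^a$ only $M=\ell^a,\ell^{a-1},\ell^{a-2},\ell^{a-3}$ occur, with signs $+,-,-,+$. The elliptic part of $s_{k,\ell^\alpha}$ is a sum of $p(t/\sqrt L,n)H(t^2-4Ln)$ over $L=\ell^j$ with $0\le j\le\alpha$ and $t=\ell^jt'$, where the squarefreeness condition forces $\ell\nmid t'$ when $\alpha-j\ge2$. So the discriminants that appear are $\ell^{2j}t'^2-4\ell^jn$ for all $j$, not $t^2-4\ell n$. There are no root counts of $x^2-tx+\ell n$ modulo $\ell^a$ anywhere in this formula (those belong to the Eichler--Selberg formula for $T_n$ on $\Gamma_0(N)$), so your step (1) has nothing to attach to. What is needed instead are two concrete facts:
\begin{enumerate}
\item In the signed sum over $M$, every contribution with $j\notin\{a,a-2,a-4\}$ cancels identically, and the surviving $j$ are odd precisely because $a$ is odd. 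This, together with the hyperbolic cancellation below, is where oddness enters; twist-minimality is never used.
\item For odd $j$ one has $\ell^{2j}t'^2-4\ell^jn=\ell^{j-1}\big((\ell^{(j+1)/2}t')^2-4\ell n\big)$, with $\ell$ exactly dividing the second factor. The class number relation then gives $L(1,\psi_{\ell^{2j}t'^2-4\ell^jn})=\ell^{-(j-1)/2}\sigma_1(\ell^{(j-1)/2})\,L(1,\psi_{(\ell^{(j+1)/2}t')^2-4\ell n})$, and the angle $\phi$ is unchanged.
\end{enumerate}
Only then do the coefficients collapse to $\ell^{a/2}$ times the sum over $\ell^{(a+1)/2}\mid t$ minus $\ell^{a/2-1}$ times the sum over $\ell^{(a-1)/2}\mid t$, which is exactly the weight $c_{\ell^{(a+1)/2}}(t)/\sqrt{\ell}$. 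Saying that ``Atkin--Lehner twisting effectively reduces the level to $\ell$ times a square'' is a heuristic, not a substitute for this computation.

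The bookkeeping you do state is also off in ways that would not survive.
\begin{itemize}
\item The polynomial in the formula is $p(t/\sqrt L,n)$, whose roots have modulus $\sqrt n$. After the factor $-\frac12$, each elliptic term contributes $\frac{1}{\pi}(-1)^{k/2}L^{1/2}n^{(k-1)/2}\cos((k-1)\phi_{t,Ln})L(1,\psi_{t^2-4Ln})$. It is this $L^{1/2}=\ell^{j/2}$ that combines with $\ell^{-(j-1)/2}\sigma_1(\ell^{(j-1)/2})$.
\item Your $p_k(t,\ell n)$ is larger than this by $\ell^{(k-2)/2}$, by homogeneity. The resulting factor $\ell^{(k-1)/2}$ cannot be ``normalized'' into $\ell^{-1/2}$, since the final weights do not depend on $k$.
\item For prime $n$, the non-elliptic terms of $s_{k/2+1,\ell^\alpha}$ are $-(\ell^{[\alpha/2]},n+1)$ and, when $k=2$, $(\alpha+1)(n+1)$. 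The first cancels in the signed sum because $a$ is odd ($[a/2]=[(a-1)/2]$ and $[(a-2)/2]=[(a-3)/2]$), not because $a\ge5$. The second, which you omit, cancels because $(a+1)-a-(a-1)+(a-2)=0$.
\item The degenerate terms $t^2=4Ln$ vanish because $\ell^jt'^2=4n$ has no solution. This is a statement about $\ell^jn$ for each $j$, not about $n\ell^a$.
\end{itemize}
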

\begin{proof}
    By \cite[p.117]{sz}, the Eichler--Selberg trace formula for $S_{2k-2}^{\new}(N)$ states that
\begin{equation}
\tr(T_n\circ W_{N}, S_{2k-2}^{\new}(N))
= \sum_{M\mid N} \alpha(N/M) s_{k, M}(n, M) \label{eq:tr_2k-2}
\end{equation}
for any $N\in\Z_{\geq 1}$ with $(n,N)=1$, where $\alpha(\cdot)$ is the multiplicative arithmetic function defined by 
\begin{equation}
\alpha(p^j) \coloneqq \begin{cases}
1 & \text{ if } j\in \{0, 3\} \\
-1 & \text{ if } j\in \{1, 2\} \\
0 & \text{ if } j\geq 4
\end{cases}
\end{equation}
for any prime $p$. The factors $s_{k,M}(n,M)$ in \eqref{eq:tr_2k-2} are defined as follows.
Write $Q(n)$ for the greatest integer whose square divides $n$ (i.e. $Q(n)\coloneqq\prod_{p\mid n} p^{[\ord_p(n)/2]}$, where $[\cdot]$ denotes the integral part). Write $\sigma_0(n) := \sum_{d|n} 1$ and $\sigma_1(n) := \sum_{d|n} d$.
Then \cite[\S1, Theorem 1]{sz} asserts that
\begin{align}
s_{k, M}(n, M) &= -\frac{1}{2} \sum_{L\mid M} \sum_{\substack{t\in \Z \\ L\vert t,\, t^2-4Ln\leq 0\\ ((t/L)^2,\,M/L)\text{ $\square$-free}}} p_{2k-2}(t/\sqrt{L}, n) H(t^2-4Ln) \label{eq:skm} \\ 
&\quad\, -\frac{1}{2}\sum_{d\mid n} \min\{d, n/d\}^{2k-3}(Q(M), d+n/d) + \mathbbm{1}_{\{k=2\}} \sigma_0(M) \sigma_1(n),
\end{align}
where~$H(D)$ is the Hurwitz--Kronecker class number of $\abs{D}$ (see \cite[p.120]{sz}), which by \cite[Equation (1.1)]{bl} and Dirichlet's class number formula for~$L(1,\psi_{d})$ is equal to
\begin{equation}\label{eq:H(D)}
    H(D) = \begin{cases} 
        -\frac{1}{12} &\text{ if }D=0 \\
        \frac{\sqrt{|D|}}{\pi} L(1, \psi_D) &\text{ if }D<0
    \end{cases},
\end{equation}
and where we write
\begin{align*}
    p_{k}(t, n) := 
    \begin{cases}
    (k-1)(t/2)^{k-2} &\text{ if }t^2-4n=0 \\
    \frac{\rho_1^{k-1}-\rho_2^{k-1}}{\rho_1-\rho_2} &\text{ if }t^2-4n<0
    \end{cases},
\end{align*}
where $\rho_1$ and $\rho_2$ are the roots of $x^2 - tx + n = 0$. In particular, if $t^2-4Ln<0$, then the roots $\frac{t\pm i\sqrt{4Ln-t^2}}{2\sqrt{L}}$ of $x^2-\frac{t}{\sqrt{L}}x+n=0$ have modulus $\sqrt{n}$ and argument $\pm(\frac{\pi}{2}-\phi_{t,Ln})$. In this case, we compute
\begin{align}
    p_k(t/\sqrt{L},n) = 2(-1)^{\frac{k}{2}+1}L^{\frac{1}{2}}n^{\frac{k-1}{2}}\frac{\cos((k-1)\phi_{t,Ln})}{\sqrt{4Ln-t^2}}. \label{eq:pkn}
\end{align}

We now specify to the case where~$N=\ell^a$. Recalling that $k$ is even and $a \ge 5$ and 
rewriting \eqref{eq:tr_2k-2} for $k$ instead of $2k-2$, we obtain
\begin{align} \label{eq:trl}
    &\tr(T_n\circ W_{\ell^a}, S_{k}^{\new}(\ell^a)) = \sum_{0\leq j\leq a} \alpha(\ell^{a-j}) s_{\frac{k}{2}+1, \,\ell^j}(n, \ell^j) \\ 
    &= s_{\frac{k}{2}+1,\, \ell^a}(n, \ell^a) - s_{\frac{k}{2}+1,\, \ell^{a-1}}(n, \ell^{a-1}) - s_{\frac{k}{2}+1,\, \ell^{a-2}}(n, \ell^{a-2}) + s_{\frac{k}{2}+1, \,\ell^{a-3}}(n, \ell^{a-3}).
\end{align}
 Plugging \eqref{eq:pkn} and \eqref{eq:H(D)} into \eqref{eq:skm}, we obtain
\begin{align}
s_{\frac{k}{2}+1, \,\ell^\alpha}(n, \ell^\alpha) &= \frac{(-1)^{\frac{k}{2}} n^{\frac{k-1}{2}}}{\pi}\sum_{0\leq j\leq\alpha}\ell^{\frac{j}{2}} \sum_{\substack{t\in \Z \\ \ell^{2j} t^2- 4\ell^jn<0 \\ (t^2,\, \ell^{\alpha-j}) \,\square\text{-free}}}\cos((k-1)\phi_{\ell^jt, \ell^jn}) L(1, \psi_{\ell^{2j}t^2-4\ell^jn}) \label{eq:sk/2+1} \\ 
&\quad\,- (\ell^{[\frac{\alpha}{2}]},n+1) + \mathbbm{1}_{\{k=2\}}(\alpha+1) (n+1),
\end{align}
for $\alpha\in\Z_{\geq 0}$.
Note that there cannot be any terms with~$0\leq j\leq\alpha$ and~$t\in\Z$ such that~$\ell^{2j}t^2-4\ell^jn=0$. 
To simplify \eqref{eq:sk/2+1} further, we define
\begin{equation}
A(t, m) := \begin{cases} 
\cos((k-1)\phi_{t, m})L(1, \psi_{t^2-4m}) & \text{ if } t^2-4m<0\\ 0 & \text{ if }t^2-4m=0 
\end{cases},
\end{equation}
which satisfies 
\begin{equation}\label{e:A_jodd}
A(\ell^j t, \ell^j n)
= \ell^{-\frac{j-1}{2}}\sigma_1(\ell^{\frac{j-1}{2}}) A(\ell^{\frac{j+1}{2}} t, \ell n)
\end{equation}
for odd $j\in\Z_{\geq1}$ by \cite[Equation (1.1)]{bl}. We may rewrite \eqref{eq:sk/2+1} as
\begin{align}
s_{\frac{k}{2}+1,\, \ell^\alpha}(n, \ell^\alpha) &= \frac{(-1)^{\frac{k}{2}} n^{\frac{k-1}{2}}}{\pi} \bigg(\ell^{\frac{\alpha}{2}}\sum_{t\in \Z} A(\ell^\alpha t, \ell^\alpha n)+\ell^{\frac{\alpha-1}{2}} \sum_{t\in \Z} A(\ell^{\alpha-1} t, \ell^{\alpha-1} n) + \sum_{0\leq j\leq\alpha-2} \ell^{\frac{j}{2}} \sum_{\substack{t\in \Z\\ \ell\nmid t}} A(\ell^j t, \ell^j n)\bigg) \\ 
&\quad\,- (\ell^{[\frac{\alpha}{2}]}, n+1) + \mathbbm{1}_{\{k=2\}} (\alpha+1) (n+1). \label{eq:sklA}
\end{align}
Returning to $\tr(T_n \circ W_{\ell^a}, S_k^{\new}(\ell^a))$, we end up with
\begin{align}
&\tr(T_n \circ W_{\ell^a}, S_k^{\new}(\ell^a)) \\
&= \frac{(-1)^{\frac{k}{2}}n^{\frac{k-1}{2}}}{\pi} \bigg(\ell^{\frac{a}{2}}\sum_{t\in \Z} A(\ell^{a} t, \ell^a n) + \ell^{\frac{a-2}{2}}\sum_{\substack{t\in \Z\\ \ell\nmid t}} A(\ell^{a-2}t, \ell^{a-2}n) - 2\ell^{\frac{a-2}{2}}\sum_{t\in \Z} A(\ell^{a-2}t, \ell^{a-2}n) \\ 
&\quad\quad\quad\quad\quad\quad\quad + \ell^{\frac{a-4}{2}}\sum_{t\in \Z} A(\ell^{a-4}t, \ell^{a-4}n) - \ell^{\frac{a-4}{2}} \sum_{\substack{t\in \Z\\ \ell\nmid t}} A(\ell^{a-4}t, \ell^{a-4}n)
\bigg) \\
&= \frac{(-1)^{\frac{k}{2}}n^{\frac{k-1}{2}}}{\pi} \ell^{\frac{1}{2}}\bigg(\sigma_1(\ell^{\frac{a-1}{2}})\sum_{t\in \Z}A(\ell^{\frac{a+1}{2}} t, \ell n) + \sigma_1(\ell^{\frac{a-3}{2}}) \sum_{\substack{t\in \Z\\ \ell\nmid t}} A(\ell^{\frac{a-1}{2}}t, \ell n) - 2\sigma_1(\ell^{\frac{a-3}{2}}) \sum_{t\in \Z} A(\ell^{\frac{a-1}{2}}t, \ell n) \\
&\quad\quad\quad\quad\quad\quad\quad\quad + \sigma_1(\ell^{\frac{a-5}{2}}) \sum_{t\in \Z} A(\ell^{\frac{a-3}{2}}t, \ell n) - \sigma_1(\ell^{\frac{a-5}{2}}) \sum_{\substack{t\in \Z\\ \ell\nmid t}} A(\ell^{\frac{a-3}{2}}t, \ell n)\bigg) \\
&= \frac{(-1)^{\frac{k}{2}}n^{\frac{k-1}{2}}}{\pi} \ell^{\frac{1}{2}}\bigg((\sigma_1(\ell^{\frac{a-1}{2}})- \sigma_1(\ell^{\frac{a-3}{2}}))\sum_{t\in \Z} A(\ell^{\frac{a+1}{2}}t, \ell n)- (\sigma_1(\ell^{\frac{a-3}{2}}) - \sigma_1(\ell^{\frac{a-5}{2}}))\sum_{t\in \Z} A(\ell^{\frac{a-1}{2}}t, \ell n)\bigg).
\end{align}
Since
\begin{align*}
    \ell^{\frac{1}{2}}(\sigma_1(\ell^{\frac{a-1}{2}})-\sigma_1(\ell^{\frac{a-3}{2}}))=\ell^{\frac{a}{2}},\quad \ell^{\frac{1}{2}}(\sigma_1(\ell^{\frac{a-3}{2}})-\sigma_1(\ell^{\frac{a-5}{2}}))=\ell^{\frac{a}{2}-1}
\end{align*}
and
\begin{align*}
\frac{c_{\ell^{(a+1)/2}}(t)}{\sqrt{\ell}}
&= \begin{cases} 
\ell^{\frac{a}{2}}(1-\ell^{-1}) 
& \text{ if } \ell^{\frac{a+1}{2}}\mid t \\ 
- \ell^{\frac{a}{2}-1} 
& \text{ if } \ell^{\frac{a-1}{2}}\pdiv t \\
0 & \text{ if } \ell^{\frac{a-1}{2}}\nmid t
\end{cases},
\end{align*} 
we obtain the desired result.
\end{proof}

\section{Proof of Theorem \ref{thm:main}}\label{sec:proof}

We turn to the proof of Theorem \ref{thm:main}, making use of the trace formula from the previous section. Suppose $a\in\Z_{\geq5}$ is odd. Combining \eqref{eq:trace formula and coeff} and Proposition~\ref{prop:explicit trace formula}, we obtain 
\begin{align}
\Sigma &=\frac{1}{\pi} 
\sum_{\substack{n/\ell^a\in E\\ n\text{ prime }}} \sqrt{n}\log n \sum_{\substack{t\in \Z\\ t^2-4\ell n<0}} \frac{c_{\ell^{(a+1)/2}}(t)}{\sqrt{\ell}}\cos((k-1)\phi_{t, \ell n}) L(1, \psi_{t^2-4\ell n})  \\
&=  \frac{1}{\pi\sqrt{\ell}} \sum_{t\in \Z} c_{\ell^{(a+1)/2}}(t)
\sum_{\substack{n\in\ell^a E\cap (\frac{t^2}{4\ell}, +\infty) \\ n\text{ prime}}} \cos((k-1)\phi_{t, \ell n}) 
L(1, \psi_{t^2-4\ell n}) \sqrt{n}\log n.\label{eq:reformulating_Sigma}
\end{align}

We now evaluate the inner sum over primes $n$ in a way similar to \cite[Section~4]{bbld}. That is, we reduce to a sum over primes $n$ in arithmetic progressions. To this end, we define the local averages
\begin{align}
\widetilde{\psi}_t^{(\ell)}(m) 
:= \frac 1{\varphi(m^2)} 
\sum_{\substack{n\bmod{m^2}\\ \gcd(n, m)=1}}
\psi_{t^2 - 4\ell n}(m)\quad\text{ and }\quad \widetilde{\psi}_t(m) := \frac{1}{\varphi(m^2)}
\sum_{\substack{n\bmod{m^2}\\ \gcd(n, m)=1}}\psi_{t^2-4n}(m)
\end{align} 
for $m\in\Z_{\geq1}$, $t\in\Z$. The second of these two averages is the same as $\widetilde{\psi}_t(m)$ defined in \cite[Lemma 4.2]{bbld}, so we can study the first average by relating the two.

\begin{lemma}
For fixed $t\in\Z$, the function~$\widetilde{\psi}^{(\ell)}_t(m)$ is multiplicative in $m$.
\end{lemma}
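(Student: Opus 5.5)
The plan is to prove multiplicativity by the Chinese remainder theorem, after first isolating how $\psi_D(m)$ depends on $m$. For coprime $m_1,m_2$, I first want the factorization
\begin{equation}
\psi_D(m_1m_2)=\psi_D(m_1)\,\psi_D(m_2)
\end{equation}
for every discriminant $D=dL^2$ (including $D=0$, where both sides are $1$).

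This identity I would check directly from the definition $\psi_D(m)=\legendre{d}{m/(m,L)}$. Since $(m_1m_2,L)=(m_1,L)(m_2,L)$ for coprime $m_1,m_2$, we get
\begin{equation}
\frac{m_1m_2}{(m_1m_2,L)}=\frac{m_1}{(m_1,L)}\cdot\frac{m_2}{(m_2,L)}.
\end{equation}
Complete multiplicativity of the Kronecker symbol $\legendre{d}{\cdot}$ in its lower argument then gives the factorization. This is exactly the property used for $\widetilde{\psi}_t$ in \cite[Lemma 4.2]{bbld}, so I could also simply cite it there.

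Next I need periodicity. The value $\psi_{t^2-4\ell n}(m)$ should depend only on $n \bmod m^2$; this is the input that makes the average $\widetilde{\psi}^{(\ell)}_t(m)$ well defined, and I would justify it as in \cite{bbld}. Concretely, $\psi_D(m)$ depends only on $D$ modulo $4m^2$, or for odd $m$ only on $D \bmod m^2$ up to the usual adjustment at $2$. Since $D=t^2-4\ell n$ varies with $n$ through $4\ell n$, changing $n$ by $m^2$ changes $D$ by $4\ell m^2$. The factor $\ell$ is harmless here, since it only multiplies the shift.

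With these two facts, the main step is the CRT. Take $m=m_1m_2$ with $(m_1,m_2)=1$. Residues $n \bmod m^2$ with $(n,m)=1$ correspond bijectively to pairs $(n_1 \bmod m_1^2,\ n_2 \bmod m_2^2)$ with $(n_i,m_i)=1$, via $n\equiv n_i \pmod{m_i^2}$. By periodicity, $\psi_{t^2-4\ell n}(m_i)=\psi_{t^2-4\ell n_i}(m_i)$. The summand therefore factors as a product of a function of $n_1$ and a function of $n_2$, so the sum factors. Combined with $\varphi(m^2)=\varphi(m_1^2)\varphi(m_2^2)$, this gives
\begin{equation}
\widetilde{\psi}^{(\ell)}_t(m_1m_2)=\widetilde{\psi}^{(\ell)}_t(m_1)\,\widetilde{\psi}^{(\ell)}_t(m_2).
\end{equation}

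I expect the main obstacle to be the periodicity claim, in particular its behaviour at the prime $2$ and when $m$ shares factors with the square part $L$ of $D$. The route I would try first is to write $\psi_D(m)=\prod_{p^e\pdiv m}\psi_D(p^e)$. Then I would check that $\psi_D(p^e)$ depends only on $D$ modulo $p^{2e}$ (and modulo $2^{2e+2}$ when $p=2$), by determining the $p$-part of $L$ and the class of $d$ from $D$ modulo that power. The relevant shift is $4\ell m^2$, which is divisible by $4p^{2e}$, so periodicity follows. Alternatively, one can reduce to the known case of $\widetilde{\psi}_t$ from \cite{bbld} by substituting $\ell n$ for $n$ locally when $p\neq\ell$.
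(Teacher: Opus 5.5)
Your proposal is correct and is essentially the paper's proof. Both arguments factor $\psi_D(m_1m_2)=\psi_D(m_1)\psi_D(m_2)$ using $(m_1m_2,L)=(m_1,L)(m_2,L)$ and complete multiplicativity of the Kronecker symbol, then split the average by the Chinese remainder theorem, using that $\psi_{t^2-4\ell n}(m_i)$ depends only on $n \bmod m_i^2$. The only difference is that the paper cites \cite[Lemma 4.1]{bbld} for this periodicity, whereas you sketch a local verification (modulo $p^{2e}$ for odd $p$, and modulo $2^{2e+2}$ for $p=2$), which does check out.
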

\begin{proof}
Fix any $m_1,m_2 \in \mathbb Z_{\geq1}$ with $(m_1,m_2) = 1$. For $n \in \mathbb Z$, we write $t^2 - 4\ell n= dL^2$ for a fundamental discriminant $d$ and $L \in \mathbb Z_{\ge 0}$. Define $r_i\coloneqq (m_i,L)$ for $i\in\{1,2\}$. Since $(m_1,m_2) = 1$, we have $(r_1,r_2) = 1$ and $(m_1m_2,L) = r_1r_2$. Therefore
\[\psi_{t^2 - 4\ell n}(m_1m_2) 
= \legendre{d}{\frac{m_1}{r_1}\frac{m_2}{r_2}} 
= \legendre{d}{m_1/r_1}\legendre{d}{m_2/r_2} 
= \psi_{t^2 - 4\ell n}(m_1) \psi_{t^2 - 4\ell n}(m_2),\]
so that
\begin{multline*}
\widetilde{\psi}_t^{(\ell)}(m_1m_2) = \frac 1{\varphi((m_1m_2)^2)} \sum_{\substack{n\bmod{(m_1m_2)^2}\\ \gcd(n, m_1m_2)=1}}
\psi_{t^2 - 4\ell n}(m_1)\psi_{t^2 - 4\ell n}(m_2) 
\\ = \frac 1{\varphi(m_1^2)\varphi(m_2^2)} 
\sum_{\substack{n_1\bmod{m_1^2}\\ \gcd(n_1, m_1)=1}} 
\quad \sum_{\substack{n_2\bmod{m_2^2}\\ \gcd(n_2, m_2)=1}} 
\psi_{t^2 - 4\ell(n_1(m_2\overline{m}_2)^2 + n_2(m_1\overline{m}_1)^2)}(m_1)
\\ \times \psi_{t^2 - 4\ell(n_1(m_2\overline{m}_2)^2 + n_2(m_1\overline{m}_1)^2)}(m_2),
\end{multline*}
where $m_1 \overline{m_1} \equiv 1 \bmod m_2^2$ and $m_2 \overline{m_2} \equiv 1 \bmod m_1^2$. 
By \cite[Lemma 4.1]{bbld}, we have
\[\psi_{t^2 - 4\ell(n_1(m_2\overline{m}_2)^2 + n_2(m_1\overline{m}_1)^2)}(m_i) = \psi_{t^2 - 4\ell n_i}(m_i)
\]
for each $i\in\{1,2\}$, so we get
\[\widetilde{\psi}_t^{(\ell)}(m_1m_2) = \widetilde{\psi}_t^{(\ell)}(m_1)\widetilde{\psi}_t^{(\ell)}(m_2),\]
as desired.
\end{proof}

\begin{lemma}\label{lemma:lemma4.3inBBLLD}
Let~$t \in \Z$ with~$\ell|t$. 
For~$\re s > 1$, define
\begin{equation*}
L(s,\widetilde{\psi}_t^{(\ell)}) := \sum_{m\geq1} \frac{\widetilde{\psi}_t^{(\ell)}(m)}{m^s}\quad\text{ and }\quad
    L(s,\widetilde{\psi}_t) := \sum_{m\geq1} \frac{\widetilde{\psi}_t(m)}{m^s}.
\end{equation*}
Then we have
\begin{equation}\label{eq:relating-L-psi-tilde-t-ell-to-L-psi-tilde-t}
    L(s, \widetilde{\psi}_t^{(\ell)}) = L(s,\widetilde{\psi}_t)(1-\ell^{-2s}) = L(s,\widetilde{\psi}_1)P(s,t)(1-\ell^{-2s}),
\end{equation}
where
\begin{equation*}
    L(s,\widetilde{\psi}_1) = \zeta(2s)\zeta(s+2)\prod_p \begin{cases}
    (1-(1+p^{-1})(1 + p^{-s})p^{-s-1}) &\text{ if } p \neq 2 \\ 
    (1-2^{-s})(1-2^{-s-2}) &\text{ if } p = 2 \end{cases}
\end{equation*}
and
\begin{equation*}
    P(s,t) \coloneqq \prod_{p|t} \begin{cases}\frac{1-p^{-s-2}}{1-(1+p^{-1})(1+p^{-s})p^{-s-1}} &\text{ if } p \neq 2 \\ 
    \vspace{-3mm} \\
    \frac{1 + 2^{-s-1} - 2^{-2s}}{1-2^{-s}} &\text{ if } p = 2 ,\, 4 \mid t \\ 
    \vspace{-3mm} \\
    \frac{1 + 2^{-s-2} - 7 \cdot 2^{-2s-3} - 2^{-3s-2}}{(1-2^{-s})(1-2^{-s-2})} &\text{ if } p = 2,\, 4 \nmid t
    \end{cases}.
\end{equation*}
\end{lemma}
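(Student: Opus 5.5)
Since both $\widetilde{\psi}_t^{(\ell)}$ and $\widetilde{\psi}_t$ are multiplicative (the previous lemma for the former, \cite[Lemma 4.2]{bbld} for the latter), both $L$-functions factor as Euler products for $\re s>1$. The second equality in \eqref{eq:relating-L-psi-tilde-t-ell-to-L-psi-tilde-t} is exactly \cite[Lemma 4.3]{bbld}, so I would quote it directly. The first equality therefore reduces to a comparison of local factors: for $p\neq\ell$ I will show $\widetilde{\psi}_t^{(\ell)}(p^j)=\widetilde{\psi}_t(p^j)$ for all $j\geq0$, and at $p=\ell$ I will show that the local factor of $L(s,\widetilde{\psi}_t^{(\ell)})$ equals the local factor of $L(s,\widetilde{\psi}_t)$ times $(1-\ell^{-2s})$.

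For $p\neq\ell$ the plan is a change of variables. Since $\ell$ is a unit modulo $p^{2j}$, the map $n\mapsto \ell n$ permutes the residues $n \bmod p^{2j}$ coprime to $p$. By \cite[Lemma 4.1]{bbld}, $\psi_D(p^j)$ depends only on $D \bmod p^{2j}$ (for $p=2$ one should double-check that the relevant modulus is still covered; the lemma is applied the same way in the proof of multiplicativity). Hence $\psi_{t^2-4\ell n}(p^j)=\psi_{t^2-4n'}(p^j)$ with $n'=\ell n$, and averaging gives $\widetilde{\psi}_t^{(\ell)}(p^j)=\widetilde{\psi}_t(p^j)$.

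At $p=\ell$ I use the hypothesis $\ell\mid t$. Write $D=t^2-4\ell n$ with $\ell\nmid n$. Then $\ell\,\|\,D$, so in $D=dL^2$ we have $\ell\mid d$ and $\ell\nmid L$. Therefore $\psi_D(\ell^j)=\legendre{d}{\ell}^j$ vanishes for $j\geq1$, giving $\widetilde{\psi}^{(\ell)}_t(\ell^j)=\mathbbm{1}_{\{j=0\}}$ and a local factor equal to $1$. It remains to check that the $\ell$-factor of $L(s,\widetilde{\psi}_t)$ is $(1-\ell^{-2s})^{-1}$. From the explicit formula with $p=\ell\mid t$, $\ell$ odd, the product
\[
(1-\ell^{-2s})^{-1}(1-\ell^{-s-2})^{-1}\bigl(1-(1+\ell^{-1})(1+\ell^{-s})\ell^{-s-1}\bigr)\cdot P_\ell(s,t)
\]
collapses to $(1-\ell^{-2s})^{-1}$, because $P_\ell(s,t)$ cancels the last factor and contributes $1-\ell^{-s-2}$. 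Multiplying by $(1-\ell^{-2s})$ then gives local factor $1$ on both sides, as required.

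The main obstacle is making sure that \cite[Lemma 4.1]{bbld} really gives the periodicity of $D\mapsto\psi_D(p^j)$ modulo $p^{2j}$ (including $p=2$), so that the substitution $n\mapsto\ell n$ is legitimate. A secondary check is that the $\ell$-factor of $L(s,\widetilde{\psi}_t)$ read off from the quoted formula is correct; alternatively, I would compute $\widetilde{\psi}_t(\ell^j)$ directly when $\ell\mid t$, where $t^2-4n\equiv -4n$ gives the averages $1$ for even $j$ and $0$ for odd $j$.
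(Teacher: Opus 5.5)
Your proposal is correct and is essentially the paper's proof. The paper's one-line appeal to \cite[Lemma 4.1]{bbld} for $p\neq\ell$ is exactly your substitution $n\mapsto\ell n$, and the vanishing of $\widetilde{\psi}_t^{(\ell)}(\ell^j)$ for $j\geq1$ comes from $\ell\,\|\,t^2-4\ell n$, as you argue. The paper quotes $\sum_{j\ge0}\widetilde{\psi}_t(\ell^j)\ell^{-js}=(1-\ell^{-2s})^{-1}$ from the proof of \cite[Lemma 4.3]{bbld}, and your direct computation ($\widetilde{\psi}_t(\ell^j)=1$ for even $j$, $0$ for odd $j$) gives exactly this. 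On your $p=2$ worry: ``$\psi_D(p^j)$ depends only on $D\bmod p^{2j}$'' is indeed false at $p=2$ (take $D=1,5$), but you only need $n\mapsto\psi_{t^2-4n}(p^j)$ to be periodic modulo $p^{2j}$, i.e.\ dependence on $D$ modulo $4p^{2j}$. That is the same periodicity the paper already relies on in defining $\widetilde{\psi}_t$ and $\widetilde{\psi}_t^{(\ell)}$ and in the multiplicativity lemma.
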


\begin{proof}
It suffices to prove the first equation in \eqref{eq:relating-L-psi-tilde-t-ell-to-L-psi-tilde-t}, as the rest follows immediately from \cite[Lemma 4.3]{bbld}.
If $m = p^b$ for some $b\in\Z_{\geq0}$ and $p \ne \ell$, then 
\[
\widetilde{\psi}_t^{(\ell)}(p^b) = \frac 1{\varphi(p^{2b})} \sum_{\substack{n \bmod p^{2b}\\ p\nmid n}} \psi_{t^2 - 4\ell n}(p^b) 
= \frac 1{\varphi(p^{2b})} 
\sum_{\substack{n \bmod p^{2b}\\ p\nmid n}} \psi_{t^2 - 4n}(p^b) 
= \widetilde{\psi}_t(p^b)\]
by \cite[Lemma 4.1]{bbld}.
On the other hand, recalling that $\ell |t$, we have $\psi_{t^2 - 4\ell n}(\ell^b) = 0$ for all $n\neq\ell$ and $b \in \Z_{\geq1}$, and thus $\widetilde{\psi}_t^{(\ell)}(\ell^b) = 0$ if $b \in\Z_{\geq1}$. 
So we obtain by Lemma \ref{lemma:lemma4.3inBBLLD} that
\[L(s, \widetilde{\psi}_t^{(\ell)}) 
= \prod_p \Big(\sum_{j\geq0} \widetilde{\psi}_t^{(\ell)}(p^j) p^{-js}\Big) 
= L(s, \widetilde{\psi}_t) \Big(\sum_{j\geq0} \widetilde{\psi}_t(\ell^j) \ell^{-js}\Big)^{-1}\quad\text{ for }\re s>1.
\]
It is shown in the proof of \cite[Lemma 4.3]{bbld} that
\[\sum_{j\geq0} \widetilde{\psi}_t(\ell^j) \ell^{-js} = (1-\ell^{-2s})^{-1},\]
which completes the argument.
\end{proof}

The following lemma is an adaptation of \cite[Lemma 4.5]{bbld}. 
Importantly, its argument replaces the reliance on GRH with an application of Baker’s theorem and a Lindel\"of-on-average bound; its proof is given in Section \ref{sec:bombieri_vin}.
\begin{lemma}\label{lem:lemma4.5-equivalent-using-bombieri-vinogradov}
Let $\ell$ be a prime, $t \in \mathbb Z$ with $\ell\mid t$ and $A,B \in \mathbb R$ with $\frac{t^2}{4\ell} < A < B$. 
Let $\Phi \in C^1([A,B])$, and set $M \coloneqq \max_{u \in [A,B]} |\Phi(u)|$ and $V \coloneqq \int_A^B |\Phi'(u)|du$. 
Then
\[\sum_{\substack{n \in (A,B] \\ n \text{ prime}}} \Phi(n)L(1, \psi_{t^2 - 4\ell n}) \log n 
= L(1,\widetilde{\psi}^{(\ell)}_t) \int_{A}^{B} \Phi(u)\, du
+  O_{C}\Big((M+V)\frac{B}{(\log B)^C}\Big)
\]
for any $C >0$. 
\end{lemma}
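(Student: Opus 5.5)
The plan is to follow the structure of \cite[Lemma 4.5]{bbld}, but to replace GRH by an averaged treatment of the moduli. First I reduce by partial summation to the case $\Phi\equiv 1$ on subintervals $(A,x]$. Once we know
\[
S(x):=\sum_{\substack{n\in(A,x]\\ n\text{ prime}}} L(1,\psi_{t^2-4\ell n})\log n = L(1,\widetilde{\psi}^{(\ell)}_t)(x-A) + O_C\big(B(\log B)^{-C}\big)
\]
uniformly for $x\in(A,B]$, Stieltjes integration against $\Phi$ gives the main term $L(1,\widetilde\psi_t^{(\ell)})\int_A^B\Phi$. The error is at most $(M+V)$ times the uniform error above. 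So everything reduces to the unweighted sum $S(x)$.

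To evaluate $S(x)$ I expand $L(1,\psi_{t^2-4\ell n})$ as a truncated Dirichlet series and split it into two ranges.
\begin{itemize}
\item \emph{Long tail.} I want to truncate at $m\le Y$ with $Y=x^{1/2}(\log x)^{-D}$, or a small power of $x$. Pointwise truncation needs GRH, so instead I bound the tail $\sum_{m>Y}\psi_D(m)/m$ on average over $n\le x$. The discriminants $D=t^2-4\ell n$ run over distinct integers in an interval of length $\asymp x$. I write $D=dL^2$ and use the Lindel\"of-on-average (large sieve for quadratic characters) bound of \cite{HB}, applied via a moment estimate: H\"older, then a $2r$-th moment of $\sum_{Y<m\le Z}\psi_d(m)/m$ over fundamental $d$. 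The tail contribution summed over $n\le x$ should then be $\ll x^{1-\delta}$. The imprimitive factor $L$ costs only divisor-type losses, since $\psi_D$ differs from $\psi_d$ only at primes dividing $L$.
\item \emph{Short part.} For $m\le Y$ I swap the sums. Because $\psi_{t^2-4\ell n}(m)$ depends only on $n\bmod m^2$ (by \cite[Lemma 4.1]{bbld} and the computation behind the multiplicativity lemma), I get
\[
\sum_{m\le Y}\frac1m\sum_{\substack{c\bmod m^2\\(c,m)=1}}\psi_{t^2-4\ell c}(m)\,\theta(x;m^2,c),
\]
plus negligible terms from primes dividing $m$. I then replace $\theta(x;m^2,c)$ by $(x-A)/\varphi(m^2)$. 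This produces $(x-A)\sum_{m\le Y}\widetilde\psi^{(\ell)}_t(m)/m$, which is $L(1,\widetilde\psi^{(\ell)}_t)$ plus a small tail. Convergence at $s=1$ and the size of the tail follow from the factorization in Lemma~\ref{lemma:lemma4.3inBBLLD}: $L(s,\widetilde\psi_1)$ involves $\zeta(2s)\zeta(s+2)$ times an absolutely convergent product, so the coefficients have partial sums $\ll m^{1/2+\varepsilon}$. The term $P(s,t)$ is a finite product that is harmless uniformly in $t$ up to divisor bounds.
\end{itemize}

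The error from replacing $\theta$ is bounded by $\sum_{m\le Y}\frac1m\max_c|\theta(x;m^2,c)-x/\varphi(m^2)|$. This is a Bombieri--Vinogradov average over square moduli $q=m^2\le Y^2$. Baker's theorem \cite{MR3670199-baker} gives such a bound with saving $(\log x)^{-C}$ for moduli $m^2\le x^{1/2}(\log x)^{-D}$, i.e.\ $m\le x^{1/4-\epsilon}$. The weight $1/m$ only helps. I therefore take $Y=x^{1/4-\epsilon}$. This yields the $(\log B)^{-C}$ savings claimed, and it explains why the unconditional result is only $a^{-C}$.

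The main obstacle is the tail $m>Y$ with $Y$ only $x^{1/4-\epsilon}$. I need the average of $\big|\sum_{m>Y}\psi_D(m)/m\big|$ over $D$ in an interval of length $x$ to be $o((\log x)^{-C})$. First I would try Heath-Brown's quadratic large sieve in the form $\sum_{d\le X}|\sum_{m\le Z}a_m\psi_d(m)|^2\ll (XZ)^\varepsilon(X+Z)\|a\|^2$, on dyadic blocks $Z\in[Y,x^{A}]$. This gives a block contribution of $(xZ)^{\varepsilon}(x+Z)^{1/2}Z^{-1/2}x^{1/2}$ per block, which saves a power once $Z\ge x^{\epsilon'}$. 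Beyond $Z=x^{A}$ I use P\'olya--Vinogradov trivially. The remaining care is the non-fundamental part $L^2$ of $D$ and the dependence on $t$, which I expect to be absorbed by restricting to $L\le x^{\epsilon}$ and trivially bounding the sparse set of $n$ with large square part.
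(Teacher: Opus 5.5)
Your outline is sound and takes a genuinely different route from the paper, though it rests on the same two inputs: Heath-Brown's large sieve for quadratic characters and Baker's Bombieri--Vinogradov theorem over square moduli. One step in it is miscounted (see the second paragraph).

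\textbf{Comparison with the paper.} The paper never removes the weight. For each $m$ it integrates $\Phi$ by parts against $\theta(u;m^2,a)$ inside each progression. It truncates $L(1,\psi_{t^2-4\ell n})$ smoothly via $S(x,\psi)=\sum_m \psi(m)m^{-1}e^{-m/x}$. A Mellin contour shift to $\re s=-1/2$ turns the truncation error into an average of $|L(1/2+i\tau,\psi_{t^2-4\ell n})|$. H\"older's inequality and the fourth-moment bound of \cite{HB} then give $MB^{1+\varepsilon}x^{-1/2}$, and with the smoothing, completing $\sum_m e^{-m/x}\widetilde\psi^{(\ell)}_t(m)/m$ to $L(1,\widetilde\psi^{(\ell)}_t)$ costs only $x^{-1/2}$.

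You instead strip off $\Phi$ by Stieltjes integration. This makes the $(M+V)$-dependence automatic, at the price of needing the unweighted asymptotic uniformly in the endpoint. You then cut sharply at $Y\approx x^{1/4}$ and bound the sharp tail $\sum_{m>Y}\psi_D(m)/m$ on average, using the large sieve on dyadic blocks and P\'olya--Vinogradov for the far tail. This avoids the contour shift, but you must handle explicitly two points that a black-box moment bound hides:
\begin{enumerate}
\item The large sieve with $\|a\|^2$ on the right holds only for coefficients supported on squarefree integers. In general the diagonal is $\sum_{m_1m_2=\square}|a_{m_1}a_{m_2}|$. For $a_m=1/m$ on $m\sim Z$ this is still $\ll Z^{-1+\varepsilon}$, so your block bound survives.
\item The imprimitive $D=dL^2$ must be reduced to fundamental $d$. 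One way is to write $\psi_D=g*\psi_d$ with $g$ supported on divisors of $L$, and discard the $O(x^{1-\epsilon})$ values of $n$ with $L>x^{\epsilon}$, as you propose.
\end{enumerate}

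\textbf{The miscounted step.} When you replace $\theta(x;m^2,c)$ by $(x-A)/\varphi(m^2)$, the error is $\sum_{m\le Y}\frac1m\sum_c |E_\theta(x;m^2,c)-E_\theta(A;m^2,c)|$. The inner sum has $\varphi(m^2)$ terms, so bounding each by the maximum gives $\sum_{m\le Y}\varphi(m)\max_u E(u,m^2)$, not $\sum_{m\le Y}\frac1m\max_c|\cdots|$. The effective weight is therefore $\varphi(m)\asymp m$, which hurts rather than helps.

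The argument still closes, but only because Theorem~\ref{thm:baker-thm-1} carries the factor $Q^{-1/2}$. On a block $Q<m^2\le 2Q$ one has $\varphi(m)\le\sqrt{2Q}$, so each block contributes $\ll x(\log x)^{-A}$, and there are $O(\log x)$ blocks with $Q\le x^{1/2-\epsilon}$. This is exactly the paper's dyadic computation. A square-moduli Bombieri--Vinogradov bound in the usual normalization, $\sum_{q^2\le Q}E(x,q^2)\ll x(\log x)^{-A}$, would lose a factor $Q^{1/2}$ here. So Baker's result in this sharp form is essential, not just convenient.

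Finally, you also subtract $\theta(A;m^2,c)$ and want uniformity in $x\in(A,B]$. To handle this, choose $Y$ in terms of $B$ (say $Y=B^{1/4-\epsilon}$) and use the $\max_{u\le B}$ form of Baker's bound, i.e.\ the corollary following Theorem~\ref{thm:baker-thm-1}. It covers both endpoints at once.
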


We can now proceed to the proof of Theorem \ref{thm:main}. Suppose that $E=[\alpha,\beta]$. Applying first Lemma \ref{lem:lemma4.5-equivalent-using-bombieri-vinogradov} (with $[A,B]:=\ell^aE\cap(\frac{t^2}{4\ell},+\infty)$ and $\Phi(n):=\cos((k-1)\phi_{t,\ell n})\sqrt{n}\,$) and then \eqref{eq:relating-L-psi-tilde-t-ell-to-L-psi-tilde-t} to \eqref{eq:reformulating_Sigma} gives
\begin{multline}\label{eq:Sigma_after_lemma2.4}
\Sigma 
= \frac{1}{\pi\sqrt{\ell}} \sum_{t\in \Z}  c_{\ell^{(a+1)/2}}(t) \bigg(L(1, \widetilde{\psi}_t)(1-\ell^{-2}) \int_{\max\{\alpha \ell^a,\, t^2/(4\ell)\}}^{\beta \ell^a}  \cos ((k-1) \phi_{t,\ell u})\sqrt{u}\, du 
\\ + O_{\beta,\ell,C}((1+V)\ell^aa^{-C})\bigg).
\end{multline}
Here we have
\begin{align*}
V &\coloneqq \int_{\max\{\alpha \ell^a,\, t^2/(4\ell)\}}^{\beta \ell^a} 
\abs*{\frac{d}{du} (\cos ((k-1) \phi_{t,\ell u})\sqrt{u})} \, du \\ 
&\le \int_{\max\{\alpha \ell^a,\, t^2/(4\ell)\}}^{\beta \ell^a} \frac{1}{\sqrt{u}}\, du + \sqrt{\beta}\ell^{\frac{a}{2}} \int_{t^2/(4\ell)}^{+\infty} \abs*{\frac{(k-1)t}{4\sqrt{\ell u^3}} \frac 1{\sqrt{1-t^2/(4\ell u)}} \sin((k-1)\arcsin(t/(2\sqrt{\ell u})))}du \\ 
&\ll \sqrt{\beta}\ell^{\frac{a}{2}} (k-1)^2 \int_{t^2/(4\ell)}^{+\infty} \frac{t^2}{\ell u^{2}}\frac 1{\sqrt{1-t^2/(4\ell u)}}du \\ 
&\ll_{\beta,k} \ell^{\frac{a}{2}},
\end{align*}
where in the third step we applied the fact that $|\sin((k-1) \theta)| \le (k-1) |\sin \theta|$.
Changing variables $\ell^av = u$ in the integral in \eqref{eq:Sigma_after_lemma2.4}, we get
\begin{multline*}
\Sigma = \frac{1}{\pi\sqrt{\ell}} \sum_{\substack{t\in \Z \\ t^2 < 4\ell^{a+1}\beta}} c_{\ell^{(a+1)/2}}(t) 
\\ \times \bigg(\ell^{\frac{3a}{2}} L(1,\widetilde{\psi}_t) (1-\ell^{-2})\int_{\max\{\alpha,\, t^2/(4\ell^{a+1})\}}^\beta  \cos((k-1)\phi_{t,\,\ell^{a+1}v})\sqrt{v}dv + O_{\beta,k,\ell,C}(\ell^{\frac{3a}{2}}a^{-C}) \bigg).
\end{multline*}
Since $c_{\ell^{(a+1)/2}}(t)=0$ unless $\ell^{\frac{a-1}{2}}\mid t$, in which case $c_{\ell^{(a+1)/2}}(t) = \ell^{(a-1)/2} c_\ell(t/\ell^{(a-1)/2})$, 
and $L(1, \widetilde{\psi}_t)$ only depends on the primes dividing $t$, we can further rewrite
\[\Sigma = \frac{\ell^{2a-1}}{\pi} \sum_{\substack{t\in \Z\\|t|< 2\ell\sqrt{\beta}}} c_{\ell}(t)L(1,\widetilde{\psi}_{\ell t}) (1-\ell^{-2}) \int_{\max\{\alpha,\, t^2/(4\ell^2)\}}^{\beta}  \cos((k-1)\phi_{t,\ell^2v})\sqrt{v} \, dv + O_{\beta,k,\ell,C} (\ell^{2a}a^{-C}). 
\]
The remaining sum is now a finite sum depending on~$\alpha$, $\beta$ and~$\ell$, but not on~$a$. 
It remains to treat the inner integral.
Since~$k$ is even, we have  
\[\sin\left((k-1)\left(\frac{\pi}{2}-\phi_{t,\ell^2 v}\right)\right)
= (-1)^{\frac{k}{2}+1} \cos((k-1)\phi_{t,\ell^2 v})\]
and 
\[\sin\left(\frac{\pi}{2}-\phi_{t,\ell^2 v}\right)
= \cos(\phi_{t,\ell^2 v}) = \frac{\sqrt{4\ell^2 v-t^2}}{2\ell\sqrt{ v}}
= \sqrt{1-\frac{t^2}{4\ell^2 v}}, \]
so that 
\begin{align*}
\cos((k-1)\phi_{t,\ell^2 v}) &= (-1)^{\frac{k}{2}+1} \sqrt{1-\frac{t^2}{4\ell^2 v}}\frac{\sin((k-1)(\frac{\pi}{2}-\phi_{t,\ell^2 v}))}{\sin(\frac{\pi}{2}-\phi_{t,\ell^2 v})} \\ 
&= (-1)^{\frac{k}{2}+1} \sqrt{1-\frac{t^2}{4\ell^2 v}} U_{k-2}\left(\cos\left(\frac{\pi}{2}-\phi_{t,\ell^2 v}\right)\right) \\
&= (-1)^{\frac{k}{2}+1} \sqrt{1-\frac{t^2}{4\ell^2 v}} U_{k-2}\left(\frac{t}{2\ell\sqrt{v}}\right).
\end{align*}
Therefore, we obtain
\begin{multline}\label{e:final_Sigma}
\Sigma = \frac{(-1)^{\frac{k}{2}+1}\ell^{2a-1}}{\pi} \sum_{\substack{t\in \Z\\|t|< 2\ell\sqrt{\beta}}} c_{\ell}(t)L(1,\widetilde{\psi}_{\ell t})(1-\ell^{-2}) 
\int_{E \cap (t^2/(4\ell^2),\,+\infty)} \sqrt{v-\frac{t^2}{4\ell^2}} U_{k-2}\left(\frac{t}{2\ell\sqrt{v}}\right)\, dv 
\\ + O_{E,k,\ell,C} (\ell^{2a}a^{-C}). 
\end{multline}
As $a\in\Z_{\geq5}$ is assumed to be odd, we can rewrite it as $2a+1$ for some $a\in\Z_{\geq2}$.
Now Theorem \ref{thm:main} follows from the explicit expressions
\begin{align*}
    &c_\ell(t) = \mathbbm{1}_{\{\ell\vert t\}}\ell-1, \\
    &P(1,\ell t) =  \prod_{\substack{p\mid \ell t}} \left(\frac{1-p^{-3}}{1-(1+p^{-1})^2 p^{-2}}\right), \\
    &L(1,\tilde \psi_{\ell t}) = L(1,\tilde \psi_1)P(1,\ell t) = \zeta(2) \prod_{\substack{ p\nmid \ell t}}  \left(\frac{1-(1+p^{-1})^2 p^{-2}}{1-p^{-3}}\right) = \zeta(2) \prod_{\substack{ p\nmid \ell t}}  \left( \frac{p^2-p-1}{p(p-1)}\right), \\
    &\zeta(2)=\frac{\pi^2}{6}.
\end{align*}

\section{Bombieri--Vinogradov and necessary variants}\label{sec:bombieri_vin}

In order to remove the reliance on the Riemann Hypothesis from the analog of \cite[Lemma 4.5]{bbld} to obtain Lemma \ref{lem:lemma4.5-equivalent-using-bombieri-vinogradov}, we require a variant of the Bombieri--Vinogradov theorem. The unusual aspect of our setting is that our moduli are always perfect squares, and squares are sparse among all integers. Baker \cite{MR3670199-baker} proved the following version of Bombieri--Vinogradov for this setting:

\begin{theorem}[\cite{MR3670199-baker}, Theorem 1]\label{thm:baker-thm-1}
Let 
\[E(x,q) := \max_{(a,q) = 1} \left| \sum_{\substack{n \le x \\ n \equiv a \bmod q}} \Lambda(n) - \frac x{\phi(q)}\right|. \]
For $\varepsilon > 0$ and $Q \le x^{1/2 - \varepsilon}$, for all $A > 0$,
\[\sum_{Q < q^2 \le 2Q} E(x,q^2) \ll_{A,\varepsilon} x Q^{-1/2} (\log x)^{-A}.\]
\end{theorem}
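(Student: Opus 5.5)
Since this is Baker's theorem and the paper only quotes it, what follows is the argument I would expect to work, not the paper's proof. The target is a Bombieri--Vinogradov bound over the sparse set of moduli $q^2$, $Q<q^2\le 2Q$, with an extra saving of $Q^{-1/2}$.

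\textbf{Why the standard theorem is not enough.} There are about $Q^{1/2}$ such $q$. The trivial bound $E(x,q^2)\ll x\log x/Q$ therefore gives only $xQ^{-1/2}\log x$, so we need to gain an arbitrary power of $\log x$ over the trivial bound on this thin set. Bounding by the full Bombieri--Vinogradov sum over all moduli up to $2Q$ gives $x(\log x)^{-A}$, which is too weak. Combining it with the trivial bound via Cauchy--Schwarz gives only $xQ^{-1/4}(\log x)^{-A}$. So genuine arithmetic input about square moduli is required.

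\textbf{Step 1: reduction to primitive characters.} Use orthogonality to write
\[
E(x,q^2)\le \frac{1}{\varphi(q^2)}\sum_{\chi\bmod q^2,\ \chi\ne\chi_0}\abs{\psi(x,\chi)} + O\big((\log x)^2\big).
\]
Replace each $\chi$ by the primitive character $\chi^*$ of conductor $r\mid q^2$ that induces it, at negligible cost. Regroup by conductor $r$. Each $r$ occurs for $q$ with $r\mid q^2$, and summing $1/\varphi(q^2)$ over such $q$ with $q^2\sim Q$ yields a weight of roughly $Q^{-1/2}\cdot(\text{something depending on } r)/\varphi(r)$ times a divisor-type factor. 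The sum becomes
\[
\ll Q^{-1/2+o(1)}\sum_{r}\frac{w(r)}{\varphi(r)}\sideset{}{^*}\sum_{\chi\bmod r}\max_{y\le x}\abs{\psi(y,\chi)},
\]
where $r$ runs over conductors dividing some square of size $\sim Q$.

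\textbf{Step 2: small conductors.} For $r\le(\log x)^B$, Siegel--Walfisz gives $\psi(y,\chi)\ll x(\log x)^{-A'}$. This accounts for the $(\log x)^{-A}$ saving.

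\textbf{Step 3: large conductors.} For $r>(\log x)^B$, split dyadically in $r$ and apply Vaughan's identity. This turns $\psi(y,\chi)$ into type I sums and type II bilinear sums $\sum_{m\sim M}\sum_{n\sim N}\alpha_m\beta_n\chi(mn)$. Treat these with a multiplicative large sieve restricted to the relevant set of conductors, namely divisors of squares $q^2\le 2Q$.

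The key input is the Baier--Zhao large sieve for square moduli,
\[
\sum_{q\le R}\ \sideset{}{^*}\sum_{b\bmod q^2}\Big|\sum_{n\le N}a_n e(bn/q^2)\Big|^2\ll (RN)^{\varepsilon}\big(R^3+N+N^{1/2}R^2\big)\|a\|^2 .
\]
Transfer it to characters via Gauss sums, in the usual way the multiplicative large sieve is derived from the additive one. Then run the standard Bombieri--Vinogradov bookkeeping with $R=Q^{1/2}\le x^{1/4-\varepsilon/2}$.

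\textbf{Main obstacle.} The hard part is the type II estimate in Step 3. One has to check that the square-moduli large sieve bound, with its $N^{1/2}R^2$ term and the loss from passing from $q^2$ to conductors $r\mid q^2$, still beats the trivial bound by the factor $Q^{1/2}$ throughout the range $Q\le x^{1/2-\varepsilon}$. The only available room is the $x^{\varepsilon}$ margin. I would first choose the Vaughan parameters $U=V=x^{\varepsilon/10}$ and check every regime of $(M,N)$. If some regime fails, I would fall back on Heath-Brown's identity to get more flexible factorisations.
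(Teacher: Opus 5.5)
The paper gives no proof of this statement; it imports it from Baker. Judged on its own, your outline has a genuine gap, and it sits exactly at the step you flag as the main obstacle. Vaughan's or Heath-Brown's identity combined with the Baier--Zhao square-moduli large sieve cannot reach $Q\le x^{1/2-\varepsilon}$.

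\textbf{Type II sums.} Write $R=Q^{1/2}$ and $\Delta(R,N)=R^3+N+N^{1/2}R^2$. After Cauchy--Schwarz, a type II sum with $m\sim M$, $n\sim N$, $MN\asymp x$, weighted by $1/\varphi(q^2)\approx R^{-2}$, is acceptable only if $\Delta(R,M)\Delta(R,N)\ll x^{1-\delta}R^2$. The cross term $R^3\cdot N^{1/2}R^2$ forces $N\le x^2R^{-6}$, and symmetrically $M\le x^2R^{-6}$. For $R=x^{1/4-\eta}$ this pins both variables to $[x^{1/2-6\eta},x^{1/2+6\eta}]$.

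\textbf{Type I sums.} Even handled optimally by direct counting (error $O(1)$ per $m$ and per modulus), type I sums need the non-smooth variable to satisfy $M\ll xR^{-2}(\log x)^{-A}$, roughly $M\le x^{1/2+2\eta}$.

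\textbf{Why neither identity closes the gap.} Vaughan's identity would need $U,V\ge x^{1/2-6\eta}$ and $UV\le x^{1/2+2\eta}$ simultaneously, which is impossible for small $\eta$. Heath-Brown's identity does not rescue this. A product of three smooth factors of size $x^{1/3}$ has no sub-product in the type II window, and no smooth factor long enough for type I. Handling it would need a type III input, namely the distribution of $\tau_3$ in progressions to moduli $q^2\approx x^{1/2}$ on average over $q$, which the large sieve does not supply.

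\textbf{Where the method stops.} Running the same bookkeeping, the method tops out at $q\le x^{2/9-\varepsilon}$, i.e.\ $Q\le x^{4/9-\varepsilon}$; this is the Baier--Zhao range that Baker's theorem improves. Replacing $N^{1/2}R^2$ by $\min(NR^{1/2},N^{1/2}R^2)$ only widens the type II window to about $[x^{3/8},x^{5/8}]$. That still misses the $x^{1/3}\cdot x^{1/3}\cdot x^{1/3}$ configuration. So your proposal omits precisely the ingredient that distinguishes Baker's result from what the square-moduli large sieve yields.

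\textbf{A smaller gap in Step 1.} The Baier--Zhao inequality concerns fractions $b/q^2$ with $(b,q)=1$, i.e.\ characters of conductor exactly $q^2$. A character mod $q^2$ may have a conductor $r\mid q^2$ that is not a square, for example $r=u^2v$ with $v>1$ squarefree. Such a conductor carries weight about $R^{-1}/s(r)$, where $s(r)$ is the least $s$ with $r\mid s^2$. Controlling these terms needs a large sieve for moduli of that shape. Your sketch supplies neither that inequality nor an explicit form of the weight $w(r)$.
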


Theorem \ref{thm:baker-thm-1} bounds $E(x,q^2)$ instead of $\max_{u \le x} E(u,q^2)$, but in fact the latter version holds as a corollary. 
The derivation of this corollary follows along the lines of \cite[Exercise 20]{tao}.

\begin{corollary}
For $\varepsilon > 0$ and $Q \le x^{1/2 - \varepsilon}$, for all $A > 0$,
\[\sum_{Q < q^2 \le 2Q} \max_{u \le x} E(u,q^2) \ll_{A,\varepsilon} x Q^{-1/2} (\log x)^{-A}.\]
\end{corollary}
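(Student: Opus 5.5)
The plan is to deduce the maximal version from Theorem \ref{thm:baker-thm-1} by discretizing the variable $u$. For small $u$ we use a trivial bound. For the remaining $u$ we evaluate Theorem \ref{thm:baker-thm-1} at a finite grid of points, and we control the values of $u$ between grid points by counting integers in short progressions. Throughout, fix $A>0$ and $\varepsilon>0$, and let $B=A+2$ and $\delta=(\log x)^{-A-2}$.

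\emph{Step 1 (small $u$).} For any $u\le x$ and $(b,q)=1$ we have the trivial bound
\[
\sum_{\substack{n\le u\\ n\equiv b \bmod q^2}}\Lambda(n)\le (\log x)\Big(\frac{u}{q^2}+1\Big).
\]
Since $\varphi(q^2)\ge q^2/q\cdot\varphi(q)\gg q^2/\log\log q$, this gives $E(u,q^2)\ll (u/q^2+1)\log x\,\log\log x$. There are $\asymp Q^{1/2}$ values of $q$ with $Q<q^2\le 2Q$. Hence for $u\le x(\log x)^{-B}$ the sum over $q$ is
\[
\ll \big(uQ^{-1/2}+Q^{1/2}\big)(\log x)^{1+o(1)}\ll xQ^{-1/2}(\log x)^{-A}.
\]
The bound on the $Q^{1/2}$ term uses $Q\le x^{1/2}$, so that $Q^{1/2}\le xQ^{-1/2}x^{-1/2}$.

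\emph{Step 2 (grid).} Set $u_j=x(1+\delta)^{-j}$ for $0\le j\le J$, where $J\asymp \delta^{-1}B\log\log x$ is chosen so that $u_J\le x(\log x)^{-B}$. Every $u_j$ with $j\le J$ satisfies $u_j\ge x(\log x)^{-B}/(1+\delta)$. Therefore, for $x$ large, $Q\le x^{1/2-\varepsilon}\le u_j^{1/2-\varepsilon/2}$, and Theorem \ref{thm:baker-thm-1} applies at each $u_j$ with $\varepsilon/2$ in place of $\varepsilon$ and with the exponent $A'=3A+10$. This gives
\[
\sum_{Q<q^2\le 2Q}E(u_j,q^2)\ll u_jQ^{-1/2}(\log u_j)^{-A'}\ll xQ^{-1/2}(\log x)^{-A'}.
\]
Summing this trivially over the $J\ll(\log x)^{A+3}$ grid points costs only a power of $\log x$, which is absorbed by the choice of $A'$.

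\emph{Step 3 (interpolation).} Take $u\in[u_{j+1},u_j]$. The difference between the prime sums up to $u$ and up to $u_j$ counts integers in an interval of length $\le\delta u_j$ in a progression mod $q^2$, so it is $\le(\log x)(\delta x/q^2+1)$. The main terms differ by $(u_j-u)/\varphi(q^2)\le\delta x/\varphi(q^2)$. Thus
\[
E(u,q^2)\le \max_{j}E(u_j,q^2)+O\big((\delta x/q^2+1)(\log x)^{2}\big),
\]
and we bound $\max_j E(u_j,q^2)$ by $\sum_j E(u_j,q^2)$. Summing over $q$, the error term contributes $\ll Q^{1/2}(\delta x/Q+1)(\log x)^2\ll xQ^{-1/2}(\log x)^{-A}$, using $\delta=(\log x)^{-A-2}$ and again $Q\le x^{1/2}$. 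Combining Steps 1–3 gives the corollary.

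The only delicate point is the bookkeeping of logarithmic powers. The number of grid points must be small enough relative to Baker's saving, and the grid spacing $\delta$ must be fine enough relative to the trivial interpolation error. Both constraints are met because the exponent $A$ in Theorem \ref{thm:baker-thm-1} is arbitrary. The lower cutoff $u\ge x(\log x)^{-B}$ is also needed so that the hypothesis $Q\le u^{1/2-\varepsilon}$ is preserved at every grid point.
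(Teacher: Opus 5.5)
Your proof is correct and follows essentially the same route as the paper: discretize $u$ at polylogarithmically many points, sum Theorem~\ref{thm:baker-thm-1} over those points (with a larger exponent to absorb the number of points), and control intermediate $u$ by the trivial count of prime powers in short progressions. The only differences are cosmetic. The paper uses the uniform grid $y_k = kx/(\log x)^{A+10}$, whose first cell absorbs your separate small-$u$ Step~1, while your geometric grid also makes explicit the check $Q\le u_j^{1/2-\varepsilon/2}$ at each grid point, which the paper leaves implicit.
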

\begin{proof}
We will write $y_k := k \frac{x}{(\log x)^{A+10}}$ for all $0 \le k \le (\log x)^{A+10}$. 
We then have
\begin{align*}
\sum_{Q < q^2 \le 2Q}& \max_{u \le x} E(u,q^2) \\ 
&= \sum_{Q < q^2 \le 2Q} \max_{0 \le k \le (\log x)^{A+10}} \left(\max_{y_k \le u \le y_{k+1}} E(u,q^2)\right) \\ 
&\ll \sum_{Q < q^2 \le 2Q} \max_{0 \le k \le (\log x)^{A+10}} \left( E(y_k,q^2) + \max_{(a,q^2) = 1} \sum_{\substack{y_k < n \le y_{k+1} \\ n \equiv a \bmod q^2}} \Lambda(n) + \frac{(y_{k+1}-y_k)}{\phi(q^2)} \right) \\ 
&\ll \sum_{Q < q^2 \le 2Q} \max_{0 \le k \le (\log x)^{A+10}} \left( E(y_k,q^2) + \frac{x}{\phi(q^2)(\log x)^{A+9}}\right) \\ 
&\ll \sum_{0 \le k \le (\log x)^{A+10}} \sum_{Q < q^2 \le 2Q} E(y_k,q^2) + \frac{x}{(\log x)^{A+9}} \sum_{Q < q^2 \le 2Q} \frac 1{\phi(q^2)}.
\end{align*}
We can now bound the first term by applying Theorem \ref{thm:baker-thm-1} and the second term by noting that $\phi(n) > c \frac{n}{\log \log n}$ for some absolute constant $c > 0$, to get that the expression above is
\begin{align*}
&\ll_{A,\varepsilon} \sum_{0 \le k \le (\log x)^{A+10}} y_k Q^{-1/2} (\log y_k)^{-2A-10} + \frac{x}{(\log x)^{A+9}} Q^{1/2} \frac{\log \log Q}{Q} \\ 
&\ll_{A,\varepsilon} \frac{x}{(\log x)^{3A+20}} Q^{-1/2} \sum_{0 \le k \le (\log x)^{A+10}} k + xQ^{-1/2} (\log x)^{-A-8} \\ 
&\ll_{A,\varepsilon} xQ^{-1/2} (\log x)^{-A},
\end{align*}
as desired.
\end{proof}

We turn to the proof of Lemma \ref{lem:lemma4.5-equivalent-using-bombieri-vinogradov} in the next section. 

\subsection{Proof of Lemma \ref{lem:lemma4.5-equivalent-using-bombieri-vinogradov}}

The result holds trivially if $B < 2$, so assume $B \ge 2$ and write $I = \{n \in (A,B]: n \text{ prime}\}$, 
noting that $I$ omits the left endpoint. 
For each $m\in\Z_{\geq0}$, we have
\begin{align}
\sum_{n \in I} \Phi(n) \psi_{t^2 - 4\ell n}(m)  \log n 
&= \sum_{\substack{a \bmod m^2 \\ (a,m) = 1}} \sum_{\substack{n \in I \\ n \equiv a \bmod m^2}} \Phi(n)\psi_{t^2 - 4\ell n}(m)  \log n + \sum_{\substack{n \in I \\ n|m}} \Phi(n) \psi_{t^2 -4\ell n}(m) \log n \\
&= \sum_{\substack{a \bmod m^2 \\ (a,m) = 1}} \psi_{t^2 - 4a\ell}(m) \sum_{\substack{n \in I \\ n \equiv a \bmod m^2}}  \Phi(n) \log n + \sum_{\substack{n \in I \\ n|m}} \Phi(n)\psi_{t^2 -4\ell n}(m) \log n, \label{eq:second_step_lemma4.5}
\end{align}
where in the second step we used \cite[Lemma 4.1]{bbld} for the first term. As for the second term, we observe that
\[\Big|\sum_{\substack{n \in I \\ n|m}} \Phi(n)\psi_{t^2 - 4\ell n}(m) \log n \Big| \le M \sum_{\substack{n\in I \\ n |m}} \log n \le M \log m \le M \varphi(m^2).
\]
    
We now treat the first term. 
For $m\in\Z_{\geq1}$, $a$ modulo $m^2$ with $(a,m)=1$, and $x \ge 2$, we write
\[\theta(x;m^2,a) := \sum_{\substack{p \le x \\ p \equiv a \bmod m^2}} \log p
\]
and $E_\theta(x;m^2,a) := \theta(x;m^2,a) - \frac{x}{\varphi(m^2)}$, $E(x;m^2,a) := \sum_{\substack{n \le x \\ n \equiv a \bmod m^2}} \Lambda(n) - \frac{x}{\varphi(m^2)}$, noting that $E_\theta(x;m^2,a) = E(x;m^2,a) + O(\sqrt x)$. 
Then
\begin{equation}\label{eq:consequence_of_grh}
\sum_{\substack{n \in I \\ n \equiv a \bmod m^2}}\Phi(n) \log n 
= \int_A^B \Phi(u) \,\mathrm d\theta(u;m^2,a) = \frac{1}{\varphi(m^2)}\int_A^B \Phi(u)\,du + \int_A^B \Phi(u) \,\mathrm dE_\theta(u;m^2,a).
\end{equation}
Applying integration by parts, the error term is 
\begin{align*}
\Phi(u) &E_\theta(u;m^2,a)\Big|_A^B - \int_A^B E_\theta(u;m^2,a) \Phi'(u) \mathrm du \\
&= \Phi(u) E(u;m^2,a) \Big|_A^B  - \int_A^B E(u;m^2,a) \Phi'(u) \mathrm du + O((M+V)\sqrt B) \\
&\ll (M+V)(\sqrt B + \max_{u \in (A,B]} |E(u;m^2, a)|).
\end{align*}
Summing over all $a$ modulo $m^2$ with $(a,m)=1$, we get
\begin{align}
\sum_{n \in I} &\Phi(n)\psi_{t^2 - 4\ell n}(m)\log n \\
&= \sum_{\substack{a \bmod m^2 \\ (a,m) = 1}} \psi_{t^2 -4a\ell}(m) \left(\frac{1}{\varphi(m^2)}\int_A^B \Phi(u)\mathrm du + O((M+V)(\sqrt B + \max_{u \in [A,B]} |E(u;m^2,a)|))\right) \\
&\quad\, + O(M\varphi(m^2)) \\ 
&= \tilde{\psi}_t^{(\ell)}(m) \int_A^B \Phi(u) \mathrm du + O((M+V) \varphi(m^2)(\max_{u \in [A,B]} E(u,m^2) + \sqrt B)), \label{eq:before_Lfunction}
\end{align}
where we recall that 
\[E(u,m^2) = \max_{a \bmod m^2}\abs{E(u;m^2,a)}.\]

We now proceed by ``truncating'' the $L$-values $L(1,\psi_{t^2-4\ell n})$. 
To do so, define the smoothed sum
\[S(x,\psi) \coloneqq \sum_{m\geq1}\frac{\psi(m)}{m}e^{-m/x},\]
where $x\gg1$ and $\psi$ is a non-trivial  Dirichlet character. 
For $\sigma\in\R$, let $(\sigma)$ denote the vertical path $\R\rightarrow\C:\tau\mapsto \sigma+i\tau$.
By Mellin inversion, we have for $\sigma\in\R_{>0}$ that
\begin{align}
S(x,\psi) &= \frac{1}{2\pi i}\int_{(\sigma)}L(1+s,\psi)\Gamma(s)x^s\,ds \\
&= L(1,\psi) + \frac{1}{2\pi i}\int_{(-1/2)}L(1+s,\psi)\Gamma(s)x^s\,ds. \label{eq:after_shifting}
\end{align}
The total error resulting from replacing $L(1,\psi_{t^2-4\ell n})$ by $S(x,\psi_{t^2-4\ell n})$ is bounded by 
\begin{align}
&\sum_{n\in I}\abs{\Phi(n)\log n}\cdot\abs{S(x,\psi_{t^2-4\ell n})-L(1,\psi_{t^2-4\ell n})} \\
&\ll_\varepsilon MB^\varepsilon x^{-1/2}\sum_{n\in I}\int_{\abs{\tau}\ll(B\ell)^\varepsilon}\abs{L(1/2+i\tau,\psi_{t^2-4\ell n})}\,d\tau \\
&\ll_\varepsilon MB^\varepsilon(B-A)^{3/4} x^{-1/2}\int_{\abs{\tau}\ll(B\ell)^\varepsilon}\bigg(\sum_{n\in I}\abs{L(1/2+i\tau,\psi_{t^2-4\ell n})}^4\bigg)^{1/4}\,d\tau, \label{eq:temp_bound}
\end{align}
where in the first step we applied \eqref{eq:after_shifting} followed by the rapid decay of $\Gamma(s)$, and in the second step we applied H\"older's inequality. Applying the (Lindel\"of-on-average) fourth-moment bound from \cite[Theorem 2]{HB} to the integrand, we can further bound \eqref{eq:temp_bound} as
\begin{align}
&\ll_{\varepsilon,\ell} MB^{3/4+\varepsilon} x^{-1/2}\max_{n\in I}\abs{t^2-4\ell n}^{1/4+\varepsilon} \\
&\ll_{\varepsilon,\ell} MB^{1+\varepsilon} x^{-1/2}.\label{eq:final_error}
\end{align}
    
Thus the sum over $L$-functions is given by
\begin{align*}
\sum_{n \in I} &\Phi(n)L(1,\psi_{t^2-4\ell n}) \log n \\
&= \sum_{n\in I}\Phi(n)S(x,\psi_{t^2-4\ell n})\log n + O_{\varepsilon,\ell}(MB^{1+\varepsilon}x^{-1/2}),\quad\text{ by \eqref{eq:final_error}}\\
&= \sum_{m\geq1}\frac{e^{-m/x}}{m}\sum_{n\in I}\Phi(n)\psi_{t^2-4\ell n}(m)\log n + O_{\varepsilon,\ell}(MB^{1+\varepsilon}x^{-1/2}) \\
&= \sum_{m\geq1} \frac{e^{-m/x}\widetilde{\psi}_t^{(\ell)}(m)}{m} \int_A^B \Phi(u) \,du \\
&\quad\,+ O\bigg((M+V) \sum_{m\geq1} \frac{e^{-m/x}\varphi(m^2)}{m}( \max_{u \in [A,B]} |E(u,m^2)| + \sqrt B)\bigg) + O_{\varepsilon,\ell} (MB^{1+\varepsilon} x^{-1/2}),\text{ by \eqref{eq:before_Lfunction}}.
\end{align*}
We turn our attention to the first error term, noting that $\frac{\varphi(m^2)}{m} = \varphi(m)$, which we bound above by $m$. 
The $\sqrt B$-term is bounded by
\[(M+V) \sum_{m\geq1}e^{-m/x} m \sqrt B \ll  (M+V) x^2\sqrt B.\]
By Theorem  \ref{thm:baker-thm-1}, as long as $x \le B^{1/4-\varepsilon}$, for any $C > 0$,
\begin{align*}
\sum_{m \geq1} e^{-m/x}m \max_{u \in [A,B]} E(u,m^2)& 
\ll \sum_{j\geq0}e^{-2^{j/2}/x} 2^{j/2} \sum_{2^j < m^2 \le 2^{j+1}}  \max_{u \in [A,B]} E(u,m^2) \\ 
&\ll_{C,\varepsilon} \sum_{j\geq0}e^{-2^{j/2}/x}2^{j/2}  2^{-{j/2}} B(\log B)^{-C-1} \\
&\ll_{C,\varepsilon} \frac{B}{(\log B)^C},
\end{align*}
where in the last line we used the fact that $\log x \ll \log B$. Returning to the sum over $L$-functions, we get that whenever $x \le B^{1/4-\varepsilon}$,
\begin{multline*}
\sum_{n \in I} \Phi(n)L(1,\psi_{t^2-4\ell n}) \log n 
= \sum_{m\geq1} \frac{e^{-m/x}\widetilde{\psi}_t^{(\ell)}(m)}{m} \int_A^B \Phi(u) \,du 
\\ + O((M+V)x^2 \sqrt B) + O_{C,\varepsilon}\left((M+V)\frac{B}{(\log B)^C}\right) 
+ O_{\varepsilon,\ell}(MB^{1 + \varepsilon} x^{-1/2}).
\end{multline*}
Choosing $x = \frac{M}{M+V}B^{3/14}$, we get that
\begin{align*}
\sum_{n \in I} &\Phi(n) L(1,\psi_{t^2-4\ell n}) \log n \\
& = \sum_{m\geq1} \frac{e^{-m/x}\widetilde{\psi}_t^{(\ell)}(m)}{m} \int_A^B \Phi(u) \,du + O(M^2(M+V)^{-1}B^{13/14}) + O_{C}\left((M+V)\frac{B}{(\log B)^C}\right) \\
& \quad\, + O_{\varepsilon,\ell}(M^{1/2}(M+V)^{1/2}B^{25/28 + \varepsilon}) \\ 
& = \sum_{m\geq1} \frac{e^{-m/x}\widetilde{\psi}_t^{(\ell)}(m)}{m} \int_A^B \Phi(u) \,du+ O_{C}\left((M+V)\frac{B}{(\log B)^C}\right).
\end{align*}
To finish the proof, it remains to estimate
\begin{align}
\sum_{m\geq1}\frac{(1-e^{-m/x})\abs{\widetilde{\psi}_t^{(\ell)}(m)}}{m} \leq 2\sum_{d\geq1}\frac{\mu^2(d)}{d^2}\sum_{\substack{r\geq1 \\ \text{squarefull}}}\frac{1-e^{-dr/x}}{r} \ll \sum_{d\geq1}\frac{\min\{1,\sqrt{d/x}\}}{d^2} \ll \frac{1}{\sqrt{x}}
\end{align}
as in the proof of \cite[Lemma 4.5]{bbld}.
\qed

\thispagestyle{empty}
{\footnotesize
\bibliographystyle{amsalpha}
\bibliography{reference}
}
\end{document}